\documentclass{article}
\usepackage[english]{babel}
\usepackage{amsrefs}
\usepackage{amssymb,amsmath,amsthm,amsfonts,epsfig,graphicx,psfrag,color}
\usepackage[utf8]{inputenc}
\usepackage{hyperref}
\hypersetup{
    colorlinks,
    citecolor=blue,
    filecolor=black,
    linkcolor=blue,
    urlcolor=magenta
}
\newcommand{\rojo}[1]{\textcolor{red}{#1}}

\DeclareMathOperator{\homeo}{H}
\DeclareMathOperator{\Diff}{Diff}

\DeclareMathOperator{\mesh}{mesh}

\DeclareMathOperator{\comp}{comp}

\theoremstyle{plain}
\newtheorem{thm}{Theorem}[section]
\newtheorem{cor}[thm]{Corollary}
\newtheorem{lem}[thm]{Lemma}
\newtheorem{prop}[thm]{Proposition}

\theoremstyle{definition}
\newtheorem{df}{Definition}[section]

\newtheorem{rmk}[thm]{Remark}

\theoremstyle{remark}

\DeclareMathOperator{\diam}{diam}

\DeclareMathOperator{\interior}{int}
\DeclareMathOperator{\dist}{dist}

\DeclareMathOperator{\cont}{C}

\newcommand{\R}{\mathbb R}

\newcommand{\C}{\mathcal C}
\newcommand{\Z}{\mathbb Z}
\newcommand{\N}{\mathbb N}

\newcommand{\T}{\mathbb{T}}
\newcommand{\Sph}{\mathbb{S}}
\renewcommand{\epsilon}{\varepsilon}

\newcommand{\expc}{\alpha}

\begin{document}

\author{Alfonso Artigue\footnote{Email: aartigue@litoralnorte.udelar.edu.uy. 
Adress: DMEL, Centro Universitario Regional Litoral Norte, 
Universidad de la República, Uruguay.}}
\title{Generic surface homeomorphisms are almost continuum-wise expansive}

\date{\today}
\maketitle

\begin{abstract}
We show that for a compact surface without boundary $M$ the set of cw-expansive homeomorphisms is dense in the set of all the homeomorphisms of $M$ with respect to the $C^0$ topology. After this we show that for a generic homeomorphism $f$ of $M$ it holds that: for all $\epsilon>0$ there is a cw-expansive homeomorphism $g$ of $M$ which is 
$\epsilon$-close to $f$ and is semiconjugate to $f$; moreover, if $\pi\colon M\to M$ is this semiconjugacy then $\pi^{-1}(x)$ is connected, does not separate $M$ and has diameter less than $\epsilon$ for all $x\in M$.
\end{abstract}

\section{Introduction}

It is a recurrent phenomenon in the history of dynamical systems that the chaotic behavior takes a primary role. 
This happened in Poincaré's study of homoclinic orbits, the Smale's Axiom A and the hyperbolicity of the non-wandering set of structurally stable diffeomorphisms and the robustness of the Lorenz attractor, to name a few examples.
The purpose of this article is to show the generic role played by continuum-wise expansivity in the context of homeomorphisms of compact surfaces with the $C^0$ topology. 

A chaotic topological property which is present in hyperbolic sets and the Lorenz attractor is expansivity. 
Recall that a homeomorphism $f$ of a metric space is \textit{expansive} if there is $\epsilon>0$ such that if $x\neq y$ then $\dist(f^i(x),f^i(y))>\epsilon$ for some $i\in\Z$.
In the context of surfaces this property is well understood. 
Indeed, Lewowicz \cite{Le89} and Hiraide \cite{Hi90} proved that expansive homeomorphisms of compact surfaces are conjugate to pseudo-Anosov diffeomorphisms, the two-sphere does not admit expansive homeomorphisms and for the two-torus they are conjugate to Anosov diffeomorphisms.
Some of their features are captured by a weaker notion introduced by Kato \cite{Kato93} called \textit{continuum-wise expansivity}. A \textit{continuum} is a non-empty, closed and connected subset of a given space.
We say that $f$ is \emph{cw-expansive} if there is $\epsilon>0$ such that
if $C$ is a continuum with more than one point 
then $\diam(f^i(C))>\epsilon$ for some $i\in\Z$.

Cw-expansivity is much weaker than expansivity and it is known to show several different features. 
For example, it is known that the sphere admits cw-expansive homeomorphisms \cite{Dend}. Also, there are cw-expansive diffeomorphisms with wandering points which are robustly cw-expansive in the $C^r$ topology, for $r\geq 2$ \cite{ArRobN}.
There are cw-expansive homeomorphisms of the genus 2 surface with a point whose local stable set is connected but not locally connected \cite{ArAnomalous}.
In \cite{AAV} it is shown that such systems may have infinitely many fixed points.
This zoo of examples on surfaces indicates that cw-expansivity is scattered in the set of homeomorphisms of surfaces. The main result of this paper, Theorem \ref{thmCwExpDensos} shows that cw-expansivity is $C^0$ dense for surfaces. But we have two stronger corollaries. 
We show that generic surface homeomorphism are extensions of cw-expansive systems, as explained in detail in the Corollaries \ref{corGenACEsup} and \ref{corGenACEsup2}.

Contents of the paper. In \S\ref{secDecompositions} we study general properties of decompositions, mainly for two-dimensional disks. We introduce the notion of \textit{cw-transversality} (continuum-wise transversality) by requiring a totally disconnected intersection between the objects.
When these objects are local stable and unstable continua, the cw-transversality allows us to conclude cw-expansivity. The main result of this section is Theorem \ref{thmPertCercaBorde} which proves that cw-transversality is a generic property for decompositions whose plaques are meagre (have empty interior). The statement gives a precise meaning for this.

In \S\ref{secSysDyn} we recall and develop some concepts of dynamical systems concerning cw-expansivity and almost cw-expansivity. For an arbitrary space we construct a natural $G_\delta$ subset of homeomorphisms which are almost cw-expansive. If the cw-expansive homeomorphisms are dense, then this $G_\delta$ subset is dense. 
We study some examples. In particular we show that this $G_\delta$ 
set may be dense even if the space admits no cw-expansive homeomorphism.

The main results of the paper are stated and proved in \S\ref{secDynSurf}. 

\section{Decompositions}
\label{secDecompositions}
This section is related to the theory developed in \cite{Dend} concerning 
decompositions as local charts of topological foliations to model the distribution of stable and unstable continua of a given homeomorphism.

Let $(X,\dist)$ be a compact metric space.
Let $\C(X)$ be the space of subcontinua of $X$ with the Hausdorff distance.
A \textit{decomposition} of $X$ is a partition map $Q\colon X\to \C(X)$
which is upper semicontinuous, \textit{i.e.} given $x\in X$ and $\epsilon>0$ there is $\delta>0$ such that if $\dist(y,x)<\delta$ then $Q(y)\subset B_\epsilon (Q(x))$.

The \textit{diameter} of a decomposition is
$\diam(Q)=\sup_{x\in X} \diam(Q(x))$.
We say that a decomposition $Q$ is
\textit{meagre} if each \textit{plaque} $Q(x)$ has empty interior.

\subsection{Restrictions and extensions}
Suppose that $Y\subset X$ is a closed subset.
For $Q$ a decomposition of $X$ define the \textit{restricted decomposition} $Q|_Y$ of $Y$ as
$$Q|_Y(y)=\comp_y(Q(y)\cap Y),\text{ for all }y\in Y$$
where $\comp_y(Z)$ is the connected component of $Z$ that contains $y$.

Suppose that $P$ is a decomposition of the subset $Y\subset X$. 
In this case we will say that $P$ is a \textit{partial decomposition} of $X$.
Define the \textit{singleton extension} of $P$ as the partition $P_{se}$ of $X$ given by $P_{se}(x)=P(x)$ for all $x\in Y$ and $P_{se}(x)=\{x\}$ otherwise.

\begin{prop}
If $P$ is a partial decomposition of $X$ then $P_{se}$ is a decomposition of $X$.
If in addition, $X$ has no isolated point and $P$ is meagre then its singleton extension $P_{se}$ is meagre.
\end{prop}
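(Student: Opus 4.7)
The plan is to unpack the definition of a decomposition and verify its three ingredients for $P_{se}$: (i) it is a partition of $X$, (ii) each plaque $P_{se}(x)$ is a subcontinuum, and (iii) the map $x\mapsto P_{se}(x)$ is upper semicontinuous. The partition and subcontinuum properties should be essentially tautological: the plaques of $P$ partition $Y$ by subcontinua, and adding the singletons $\{x\}$ for $x\in X\setminus Y$ produces a partition of $X$ whose pieces are still subcontinua (singletons are trivially continua). The consistency check $y\in P_{se}(x)\Rightarrow P_{se}(y)=P_{se}(x)$ splits into the case $x\in Y$ (where $P_{se}(x)=P(x)\subset Y$, so $y\in Y$ and the property follows from $P$ being a partition of $Y$) and $x\notin Y$ (trivial since then $y=x$).

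The main technical point is upper semicontinuity, which I would handle by cases. If $x\in Y$ and $\epsilon>0$ is given, I pick $\delta_1>0$ from the upper semicontinuity of $P$ on $Y$ (which serves only points $y\in Y$) and then set $\delta=\min(\delta_1,\epsilon)$; for $y\in Y\cap B_\delta(x)$ we get $P_{se}(y)=P(y)\subset B_\epsilon(P(x))=B_\epsilon(P_{se}(x))$, while for $y\notin Y$ with $\dist(y,x)<\delta\leq\epsilon$ we have $P_{se}(y)=\{y\}\subset B_\epsilon(x)\subset B_\epsilon(P_{se}(x))$ since $x\in P_{se}(x)$. If instead $x\notin Y$, closedness of $Y$ gives some $r>0$ with $B_r(x)\cap Y=\emptyset$, so for $y\in B_\delta(x)$ with $\delta=\min(r,\epsilon)$ we have $P_{se}(y)=\{y\}\subset B_\epsilon(\{x\})=B_\epsilon(P_{se}(x))$.

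For the meagre statement, I would argue plaque by plaque. If $x\notin Y$, then $P_{se}(x)=\{x\}$, and the hypothesis that $X$ has no isolated point says precisely that $\{x\}$ has empty interior in $X$. If $x\in Y$, then $P_{se}(x)=P(x)\subset Y$; if $P(x)$ had a nonempty interior in $X$, any $X$-open ball $B_r(p)\subset P(x)$ would be contained in $Y$, hence equal to $B_r(p)\cap Y$, exhibiting a nonempty $Y$-open subset of $P(x)$ and contradicting the $Y$-meagreness of $P$.

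Overall none of the steps looks difficult; the one place to be a little careful is the upper semicontinuity at a point $x\in Y$, because $P$'s upper semicontinuity is only assumed relative to $Y$ and one needs to cover the behavior of nearby points lying in $X\setminus Y$ by hand, which is what the estimate $\delta\leq\epsilon$ does. The other potentially subtle issue is interpreting \emph{meagre} consistently between the subspace $Y$ and the ambient space $X$, which is resolved by the observation that any $X$-interior point of a plaque $P(x)\subset Y$ is automatically a $Y$-interior point.
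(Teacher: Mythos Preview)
Your proof is correct and follows essentially the same approach as the paper: the case split for upper semicontinuity (using closedness of $Y$ when $x\notin Y$ and the bound $\delta\leq\epsilon$ to handle nearby points of $X\setminus Y$ when $x\in Y$) and the plaque-by-plaque meagreness check match the paper's argument almost line for line. You are a bit more explicit than the paper in verifying the partition axioms and in reconciling ``empty interior in $Y$'' with ``empty interior in $X$'', but these are the same ideas.
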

\begin{proof}
Suppose that $P$ is defined in the closed subset $Y\subset X$.
To show that $P_{se}$ is semicontinuous consider $x\in X$ and $\epsilon>0$ given. 
If $x\notin Y$, as $Y$ is closed there is $\delta\in(0,\epsilon)$ such that $B_\delta(x)$ is disjoint from $Y$. Thus, $\dist(y,x)<\delta$ implies $P_{se}(y)=\{y\}\subset B_\epsilon(x)=B_\epsilon(P_{se}(x))$.
If $x\in Y$ then there is $\delta\in (0,\epsilon)$ from the semicontinuity of $P$ over $Y$. 
Take $y\in B_\delta(x)$. If $y\in Y$ then $P(y)\subset B_\epsilon(P(x))$ from the semicontinuity over $Y$. 
If $y\notin Y$ then $P_{se}(y)=\{y\}\subset B_\epsilon(P_{se}(x))$ since $\delta<\epsilon$.

Assume that $P$ is meagre and $X$ has no isolated point. For $x\notin Y$ we have that $P_{se}(x)=\{x\}$ has empty interior because $X$ has no isolated point.
For $x\in Y$, if $U\subset P_{se}(x)$ is open then $U\subset P(x)$. Since $P$ is meagre, $U$ is empty and the proof ends.
\end{proof}

\subsection{Cw-transversality}
Given a compact metric space $Y$ and a continuous map $h\colon Y\to X$ define the \textit{pullback decomposition}
$h^*Q$ of $Y$ as
$$h^*Q(y)=\comp_y[h^{-1}(Q(h(y)))]$$
for all $y\in Y$.


\begin{df}
An arc is \textit{cw-transverse} to a decomposition if
it intersects each plaque in a totally disconnected subset.
\end{df}

\begin{rmk}
\label{rmkCwTrans}
 If $\alpha\colon [0,1]\to X$ is a parametrization of an arc (\textit{i.e.} it is continuous and injective)
 then $\alpha$ is cw-transverse to a decomposition $Q$ if and only if
 $\diam(\alpha^*(Q))=0$.
\end{rmk}

For $B\subset\R^n$ and $p\in B$ let $\cont_p([0,1],B)$ be the space of continuous functions $\phi\colon [0,1]\to B$ such that
$\phi(0)=\phi(1)=p$ with the $C^0$ topology.
The next result will be applied for $X$ a two-dimensional disk ($n=1$ in the statement).

\begin{prop}
\label{propCurvaCwTransv}
For $n\geq 1$ let $B\subset \R^n$ be a closed Euclidean ball centered at the origin $0\in\R^n$. 
If $Q$ is a meagre decomposition of $X=[0,1]\times B$ then a generic $\phi\in\cont_0([0,1],B)$ is cw-transverse to $Q$.
\end{prop}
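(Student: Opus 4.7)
My plan is to apply the Baire category theorem to the complete metric space $\cont_0([0,1],B)$. To each $\phi\in\cont_0([0,1],B)$ I associate the injective arc $\alpha_\phi(t)=(t,\phi(t))$ in $X$; by Remark~\ref{rmkCwTrans}, $\phi$ is cw-transverse to $Q$ if and only if $\diam(\alpha_\phi^*Q)=0$, which is the same as saying that for every $\epsilon>0$ no subinterval $[t_0,t_1]\subset[0,1]$ of length at least $\epsilon$ satisfies $\alpha_\phi([t_0,t_1])\subset Q(\alpha_\phi(t_0))$. I would therefore set
\[
N_\epsilon=\{\phi\in\cont_0([0,1],B):\exists\,[t_0,t_1]\subset[0,1],\ t_1-t_0\geq\epsilon,\ \alpha_\phi([t_0,t_1])\subset Q(\alpha_\phi(t_0))\}
\]
and prove that each $N_\epsilon$ is closed with empty interior, so that $\bigcap_{n\geq 1}(\cont_0([0,1],B)\setminus N_{1/n})$ is a dense $G_\delta$ of cw-transverse functions.

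Closedness of $N_\epsilon$ should be routine and follow from the upper semicontinuity of $Q$. Given a sequence $\phi_k\to\phi$ in $\cont_0([0,1],B)$ with witnessing intervals $[t_0^k,t_1^k]$, I would extract a subsequence so that $t_0^k\to t_0$ and $t_1^k\to t_1$ with $t_1-t_0\geq\epsilon$. For any $t\in[t_0,t_1]$ one can choose $t^k\in[t_0^k,t_1^k]$ with $t^k\to t$; since $\alpha_{\phi_k}(t^k)\in Q(\alpha_{\phi_k}(t_0^k))$, the upper semicontinuity of $Q$ at $\alpha_\phi(t_0)$, together with the uniform convergence $\alpha_{\phi_k}\to\alpha_\phi$, forces the limit $\alpha_\phi(t)$ to lie in $Q(\alpha_\phi(t_0))$, proving $\phi\in N_\epsilon$.

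For the empty-interior step, given $\phi_0\in\cont_0([0,1],B)$ and $\eta>0$ I would construct a $\psi\notin N_\epsilon$ with $\|\psi-\phi_0\|_\infty<\eta$. Using uniform continuity of $\phi_0$ I would choose a grid $0=s_0<s_1<\dots<s_N=1$ whose mesh is less than $\epsilon/2$ and fine enough that $\phi_0$ oscillates by less than $\eta/3$ on each $[s_i,s_{i+1}]$, and declare $\psi(s_i)=\phi_0(s_i)$. For each $i$ the open ``tube''
\[
T_i=\{(t,y)\in X:t\in(s_i,s_{i+1}),\ |y-\phi_0(t)|<\eta/3\}
\]
is a nonempty open subset of $X$, so since $Q$ is meagre the plaque $C_i=Q((s_i,\phi_0(s_i)))$ cannot contain $T_i$; I would pick any $(t_i^*,y_i^*)\in T_i\setminus C_i$ and define $\psi$ on $[s_i,s_{i+1}]$ to be the piecewise linear path through $\phi_0(s_i)$, $y_i^*$, and $\phi_0(s_{i+1})$. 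Convexity of $B$ keeps this path in $B$, the oscillation bound yields $\|\psi-\phi_0\|_\infty<\eta$, and by construction the graph of $\psi$ over $[s_i,s_{i+1}]$ contains $(t_i^*,y_i^*)\notin C_i=Q(\alpha_\psi(s_i))$, so it is not contained in any single plaque. Because the mesh is below $\epsilon/2$, every subinterval of $[0,1]$ of length at least $\epsilon$ contains some $[s_i,s_{i+1}]$ entirely, and therefore $\psi\notin N_\epsilon$.

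The crux of the argument is this perturbation step, and the role of meagreness is essential: it guarantees that inside every nonempty open tube around the graph of $\phi_0$ one can find a point outside the specific plaque $C_i$ one needs to escape, so that an arbitrarily small perturbation can break any long piece of graph contained in a single plaque. If plaques were allowed to have nonempty interior the whole tube could sit inside a single plaque and the argument would collapse; convexity of $B$ is only a technical convenience that keeps the piecewise linear interpolation inside the domain.
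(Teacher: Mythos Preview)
Your proof is correct and follows essentially the same Baire-category strategy as the paper: define for each $\epsilon>0$ the (open) set of $\phi$ whose graph has no subarc of length $\geq\epsilon$ inside a single plaque, prove openness via upper semicontinuity of $Q$, and prove density by using meagreness to push the graph off a given plaque with an arbitrarily small perturbation. The only cosmetic difference is that in the density step the paper first observes that the pullback decomposition $\tilde\phi^*Q$ has only finitely many plaques of diameter $\geq\epsilon$ and breaks each one at its midpoint, whereas you preemptively perturb on a fixed grid of mesh $<\epsilon/2$; your version is arguably cleaner since it avoids worrying about whether breaking one long interval can create new ones.
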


\begin{proof}
For $\phi\in\cont_0([0,1],B)$ define $\tilde\phi(x)=(x,\phi(x))$.
For $\epsilon>0$ let
$$
U_\epsilon=\{\phi\in \cont_0([0,1],B):\diam(\tilde\phi^*Q)<\epsilon\}.
$$
We will show that $U_\epsilon$ is open and dense in $\cont_0([0,1],B)$.
We start proving that the complement of each $U_\epsilon$ is closed.
 Take a convergent sequence $\phi_k\in \cont_0([0,1],B)$ such that
 $\diam(\tilde\phi_k^*Q)\geq \epsilon$ for all $k\geq 1$ and $\phi_k\to\psi\in \cont_0([0,1],B)$.
 This implies that there is a sequence $x_k\in [0,1]$ such that
 \[
  \diam(\tilde\phi_k^*Q(x_k))
  =\diam(\comp_{x_k}[\tilde\phi_k^{-1}(Q(\tilde\phi_k(x_k)])
  \geq \epsilon\text{ for all }k\geq 1.
 \]
 Taking a subsequence we can suppose that $x_k\to y\in[0,1]$. The semicontinuity of the maps involved in the previous equation implies that 
$\diam(\tilde\psi^*Q(y))\geq\epsilon$ and $U_\epsilon$ is open.

We will show that $U_\epsilon$ is dense in $\cont_0([0,1],B)$.
Take any $\phi\in \cont_0([0,1],B)$.
By definition, $\tilde\phi^*(Q)$ is a decomposition of the interval $[0,1]$ into closed subintervals. Thus, only a finite number of such subintervals can have diameter greater than $\epsilon$. Suppose that for $x\in[0,1]$ we have 
$\diam(\tilde\phi^*Q(x))\geq\epsilon$. 
This means that for some subinterval $[y,z]\subset [0,1]$, $y\leq x\leq z$, the curve 
$\tilde\phi([y,z])\subset X$ is contained in the plaque $Q(x,\phi(x))$.
Let $u=(y+z)/2$. Since $Q$ is meagre the plaques of $Q$ have empty interior and we can take $p\in B$ close to $\phi(u)$ and so that $(u,p)\notin Q(x,\phi(x))=Q(u,\phi(u))$.
Therefore, we can define $\phi_1\in C_0([0,1],B)$, close to $\phi$, in such a way that 
$\phi_1(a)=\phi(a)$ for $a\in[0,y]\cup [z,1]$ and $\phi_1(u)=p$. 
For $\phi_1$ we have that neither $\tilde\phi_1([y,u])$ nor 
$\tilde\phi_1([u,z])$ are contained in a plaque of $Q$.
Repeating this procedure finitely many times we obtain a map in $U_\epsilon$ which is close to the given $\phi$.

Therefore, $G=\cap_{j\geq 1} U_{1/j}$ is a residual subset of
$\cont_0([0,1],B)$.
By Remark \ref{rmkCwTrans}, each $\phi\in G$ is cw-transverse to $Q$ and the proof ends.
\end{proof}


\subsection{Decompositions of disks}
Define the sets:
\begin{itemize}
 \item the disk $D=\{(x_1,x_2)\in\R^2:x_1^2+x_2^2\leq 1\}$,
 \item the circle $\alpha_r=\{(x_1,x_2)\in\R^2:x_1^2+x_2^2=r^2\}$ for $0<r<1$,
 \item the annulus $A_r=\{(x_1,x_2)\in\R^2:r^2\leq x_1^2+x_2^2\leq 1\}$ for $0<r<1$.
\end{itemize}
Let $\pi\colon A_r\to\partial D$ be the radial projection.


\begin{lem}
\label{lemArrimoBorde}
Suppose that $\partial D$ is cw-transverse to the decomposition $Q$.
For all $\epsilon>0$ there is $\delta>0$ such that if
$1-\delta\leq r<1$
then
$\diam(\pi(Q|_{A_r}(x)))<\epsilon$ for all $x\in A_r$. This diameter of subsets of $\partial D$ is defined from the arclength of the circle.
\end{lem}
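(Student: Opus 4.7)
The plan is to argue by contradiction: suppose there are $\epsilon>0$, a sequence $r_n \uparrow 1$, and points $x_n\in A_{r_n}$ with $\diam(\pi(Q|_{A_{r_n}}(x_n)))\geq \epsilon$. For each $n$ pick $y_n,z_n$ in the continuum $K_n := Q|_{A_{r_n}}(x_n)$ whose $\pi$-images nearly realize this arclength diameter on $\partial D$.

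First I would pass to a common limit. Using compactness of $\C(D)$ under the Hausdorff metric, extract a subsequence along which $K_n\to K$ in Hausdorff distance, together with $x_n\to x$, $y_n\to y$ and $z_n\to z$. Because $K_n\subset A_{r_n}$ and $A_{r_n}$ Hausdorff-converges to $\partial D$, the limit $K$ lies in $\partial D$; because Hausdorff limits of continua are continua, $K$ is itself a subcontinuum of $\partial D$. In particular $x,y,z\in\partial D$.

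Next I would confine $K$ to a single plaque of $Q$ and show it is nondegenerate. Since $K_n\subset Q(x_n)$ and $x_n\to x$, the upper semicontinuity of $Q$ forces $K\subset Q(x)$. The radial projection $\pi$ is continuous on each fixed annulus and satisfies $\pi(w)\to w$ as $|w|\to 1$, so $\pi(y_n)\to y$ and $\pi(z_n)\to z$, giving $\dist_{\partial D}(y,z)\geq \epsilon$ and hence $K$ contains at least two distinct points. Thus $K$ is a nondegenerate subcontinuum of $\partial D\cap Q(x)$, contradicting the assumption that $\partial D$ is cw-transverse to $Q$.

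The main obstacle is aligning all four convergences on a single subsequence while juggling two inclusions that pull the limit in opposite directions: $K_n\subset A_{r_n}$ drags the limit onto $\partial D$, whereas $K_n\subset Q(x_n)$ drags it, via upper semicontinuity, into the single plaque $Q(x)$. Forcing these two inclusions to coexist in the limit is exactly what produces the forbidden nondegenerate piece of $\partial D\cap Q(x)$ and closes the argument.
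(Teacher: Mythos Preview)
Your proof is correct and follows essentially the same route as the paper's: a contradiction argument in which one extracts a Hausdorff-convergent subsequence of the continua $Q|_{A_{r_n}}(x_n)$, observes that the limit lies in $\partial D$ because $A_{r_n}\to\partial D$, uses upper semicontinuity of $Q$ to place the limit inside a single plaque, and obtains a nondegenerate subcontinuum of $\partial D\cap Q(x)$, contradicting cw-transversality. Your explicit choice of witness points $y_n,z_n$ and the remark that $\pi(w)\to w$ as $|w|\to 1$ make the passage from $\diam(\pi(K_n))\geq\epsilon$ to nondegeneracy of the limit slightly more transparent than the paper's one-line assertion, but the argument is the same.
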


\begin{proof}
 Arguing by contradiction, suppose that for some $\epsilon>0$ there are a sequence $r_n\to 1$, $r_n<1$ and $x_n\in A_{r_n}$ such that
\begin{equation}
 \label{ecuContBorde}
 \diam(\pi(Q|_{A_r}(x)))\geq\epsilon\text{ for all }n\geq 1.
\end{equation}
 Taking a subsequence we can assume that $x_n\to y\in\partial D$ and
$Q|_{A_{r_n}}(x_n)\to C$, where $C$ is a subcontinuum of $\partial D$.
From \eqref{ecuContBorde} we have that $\diam(C)\geq\epsilon>0$.
The semicontinuity of $Q$ gives us that $C\subset Q(y)$.
This contradicts that $\partial D$ is cw-transverse to $Q$ and the proof ends.
\end{proof}

If $P,Q$ are decompositions of $D$ define the \textit{intersection decomposition} $P\cap Q$ of $D$ as
\[
[P\cap Q] (x)=\comp_x[P(x)\cap Q(x)]
\]
for all $x\in D$.

\begin{lem}
\label{lemPertCercaBorde}
 If $P,Q$ are cw-transverse to $\partial D$
 then there is  a homeomorphism $h\colon D\to D$ that fixes the boundary
 and
 \[
  [(h^*P)\cap Q](x)\cap \partial D=\emptyset \text{ for all } x\in\interior(D).
 \]
\end{lem}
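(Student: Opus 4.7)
The plan is to reformulate the target, construct $h$ as a radial twist via Lemma \ref{lemArrimoBorde}, prove a local separation at boundary points, and globalize via the boundary-bumping theorem for continua. First I would observe that the conclusion is equivalent to: for every $p \in \partial D$, $\comp_p[(h^*P)(p) \cap Q(p)] = \{p\}$. For $(\Rightarrow)$, a component through an interior $x$ reaching $\partial D$ at $p$ coincides with the component through $p$, so would be non-trivial. For $(\Leftarrow)$, if some $p$ has a non-trivial component $K$, pick $q \in K \setminus \{p\}$: if $q \in \interior(D)$ its component equals $K$ and reaches $\partial D$, contradicting the hypothesis; so $K \setminus \{p\} \subseteq \partial D$, hence $K \subseteq Q(p) \cap \partial D$, totally disconnected by cw-transversality, forcing the connected set $K$ to be $\{p\}$.

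Next I would apply Lemma \ref{lemArrimoBorde} to both $P$ and $Q$ to extract sequences $\epsilon_n \downarrow 0$ and $\delta_n \downarrow 0$ such that for every $r \in [1-\delta_n, 1)$ and $x \in A_r$, both $\diam(\pi(P|_{A_r}(x)))$ and $\diam(\pi(Q|_{A_r}(x)))$ are less than $\epsilon_n$. Then I would choose a continuous $\omega : [0,1] \to [0,\pi]$ with $\omega(1) = 0$ and $\omega(r) > 2\epsilon_n$ on $[1-\delta_n, 1-\delta_{n+1}]$ (possible since $\epsilon_n \to 0$), and define $h(re^{i\theta}) = re^{i(\theta + \omega(r))}$, a homeomorphism of $D$ fixing $\partial D$.

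For the local separation, I would fix $p = e^{i\theta_0} \in \partial D$ and write $C_{P,r} = P|_{A_r}(p)$ and $C_{Q,r} = Q|_{A_r}(p)$. Since $h$ preserves each circle $\alpha_{r'}$, one has $\comp_p(h^{-1}(P(p)) \cap A_r) = h^{-1}(C_{P,r})$, and hence
\[
\comp_p[(h^*P)(p) \cap Q(p) \cap A_r] \subseteq h^{-1}(C_{P,r}) \cap C_{Q,r}.
\]
At each radius $r' \in [1-\delta_1, 1)$ lying in $[1-\delta_n, 1-\delta_{n+1})$ for some $n$, the sets $\pi(C_{P,r} \cap \alpha_{r'})$ and $\pi(C_{Q,r} \cap \alpha_{r'})$ fit inside intervals of angular length less than $\epsilon_n$ around $\theta_0$; applying $h^{-1}$ shifts the first by $-\omega(r')$, and $\omega(r') > 2\epsilon_n$ keeps the two intervals disjoint. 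Therefore $h^{-1}(C_{P,r}) \cap C_{Q,r} \subseteq \partial D$, and the component through $p$ sits inside the totally disconnected $Q(p) \cap \partial D$ and equals $\{p\}$.

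Finally, to globalize, let $K = \comp_p[(h^*P)(p) \cap Q(p)]$ and suppose $K \neq \{p\}$. If $K \subseteq A_{1-\delta_1}$ the local step forces $K = \{p\}$, a contradiction. Otherwise $K$ meets $D \setminus A_{1-\delta_1}$, and applying the boundary-bumping theorem for continua to the proper non-empty open subset $V = K \cap (D \setminus A_{1-\delta_1})$ of $K$ shows every component of $K \setminus V = K \cap A_{1-\delta_1}$ must meet $K \cap \alpha_{1-\delta_1}$; but the component through $p$ is $\{p\}$, disjoint from $\alpha_{1-\delta_1}$, another contradiction. Hence $K = \{p\}$. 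The main subtlety, and the most likely obstacle, is the careful bookkeeping of the \emph{component-through-$p$} operation in the local step: ensuring the inclusion chain leading to $h^{-1}(C_{P,r}) \cap C_{Q,r}$ is valid uniformly and that the twist amplitude $\omega(r)$ dominates the angular bound from Lemma \ref{lemArrimoBorde} at every relevant radius.
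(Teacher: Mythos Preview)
Your strategy---a radial twist $h(r,\theta)=(r,\theta+\omega(r))$ calibrated via Lemma~\ref{lemArrimoBorde}---is exactly the paper's, and your reformulation of the target in terms of boundary points is correct and useful. However, the step you flagged as ``the most likely obstacle'' does in fact fail as written.

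The gap is in the claim that for $r'\in[1-\delta_n,1-\delta_{n+1})$ the set $\pi(C_{P,r}\cap\alpha_{r'})$ lies in an arc of length $<\epsilon_n$ about~$\theta_0$. Lemma~\ref{lemArrimoBorde} bounds $\diam\bigl(\pi(P|_{A_{r'}}(p))\bigr)<\epsilon_n$, i.e.\ it controls the component of $P(p)\cap A_{r'}$ through~$p$. But $C_{P,r}=P|_{A_r}(p)$ lives in the \emph{larger} annulus $A_r$ with $r=1-\delta_1$, and $C_{P,r}\cap A_{r'}$ need not be connected: the continuum can descend below radius~$r'$, travel along a deeper circle (staying within the $\epsilon_1$-window permitted by Lemma~\ref{lemArrimoBorde} at level~$r$), and re-enter $A_{r'}$ at an angle that is within $\epsilon_1$ of~$\theta_0$ but well outside the $\epsilon_n$-window. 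So your radius-by-radius disjointness argument for $h^{-1}(C_{P,r})\cap C_{Q,r}$ breaks down, and with it the local step feeding the globalization.

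The paper avoids this by not attempting control at every radius. It argues by contradiction: if some interior point~$x$ has $[(h^*P)\cap Q](x)$ meeting~$\partial D$, choose a single~$n$ with $r_n>\|x\|$ and (by boundary bumping) a subcontinuum $C_2$ of $[(h^*P)\cap Q](x)$ lying entirely in the thin annulus~$A_{r_n}$ and joining~$\partial D$ to~$\alpha_{r_n}$. Because $C_2$ is connected and inside~$A_{r_n}$, Lemma~\ref{lemArrimoBorde} applies directly to~$C_2$ (via $Q$) and to~$h(C_2)$ (via $P$), yielding $\dist(\pi(y),z)<\epsilon_n$ and $\dist(\pi(h(y)),z)<\epsilon_n$ for $y\in C_2\cap\alpha_{r_n}$, $z\in C_2\cap\partial D$; then $s(r_n)>2\epsilon_n$ gives the contradiction. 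Only the value of the twist at the single radius~$r_n$ is needed, so there is no need to dominate the angular bound on a whole interval. Your argument is easily repaired along these lines.
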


\begin{proof}
Fix $\epsilon_*>0$ small. Let $\epsilon_n<\epsilon_*/2$ be a positive sequence with $\epsilon_n\to 0$.
 From Lemma \ref{lemArrimoBorde} applied to $P$ and $Q$
 take the corresponding values of $\delta_n$ (to have a common value of such $\delta$ take the minimum of the corresponding values for $P$ and $Q$), which we also assume that $\delta_n\to 0$.
 Let $r_n=1-\delta_n$.

 The homeomorphism $h$ of $D$ will be defined in polar coordinates by the formula $h(r,\theta)=(r,\theta+s(r))$, where $s\colon [0,1]\to [0,\epsilon_*]$ is continuous, $s(0)=s(1)=0$ and $s(r_n)>2\epsilon_n$
 for all $n\geq 1$.

 Arguing by contradiction, suppose that there is $x\in\interior D$ such that
$$[(h^*P)\cap Q](x)\cap \partial D\neq\emptyset.$$
We have that $C_1=[(h^*P)\cap Q](x)$ is a continuum contained in plaques of $Q$ and $h^*P$, containg $x$ and intersectting $\partial D$.
As $x$ is interior to $D$, there is $r_n>\|x\|$.
We can take a subcontinuum $C_2\subset C_1$ such that $C_2\subset A_{r_n}$ and $C_2$ instersects both boundaries $\partial D$ and $\alpha_r$.
Take $y\in C_2\cap \alpha_{r_n}$ and $z\in C_2\cap \partial D$.
On the one hand we have that $C_2\subset Q(z)$.
On the other hand, since $h$ fixes the boundary of the disk we have $h(z)=z$ and
$C_2\subset h^*P(z)$.

Since $C_2\subset A_{r_n}\cap Q(z)$, $z\in C_2\cap\partial D$ and $y\in C_2 \cap \alpha_{r_n}$ we can apply Lemma \ref{lemArrimoBorde} to obtain $\dist(\pi(y),z)\leq \diam(\pi(C_2))<\epsilon_n$.
Also $y\in h^*P(z)$
and $h(y)\in P(z)$;
thus we can apply Lemma \ref{lemArrimoBorde} to obtain $\dist(\pi(h(y)),z)<\epsilon_n$.
Therefore, $\dist(\pi(y),\pi(h(y))<2\epsilon_n$.
If $y$ has polar coordinates $(r_n,\theta_y)$ then the coordinates of $h(y)$ are
$(r_n,\theta_y+s(r_n))$. This contradicts that $s(r_n)>2\epsilon_n$ and the proof ends.
\end{proof}

\begin{df}
 We say that the decompositions $P,Q$ of $X$ are \textit{cw-transverse} if $[P\cap Q](x)=\{x\}$ for all $x\in X$.
\end{df}
Let $\homeo(D)$ be the space of homeomorphisms of $D$ with the $C^0$ topology.
\begin{thm}
\label{thmPertCercaBorde}
 If $P,Q$ are meagre decompositions of $D$ and cw-transverse to $\partial D$
 then for a generic homeomorphism $h\in\homeo(D)$ that fixes the boundary
 the decomposition $h^*P$ is cw-transverse to $Q$.
\end{thm}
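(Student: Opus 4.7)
The plan is Baire category in the subspace $\homeo_{\partial}(D)\subset\homeo(D)$ of homeomorphisms fixing $\partial D$ pointwise. For each $\epsilon>0$ I would set
\[
V_\epsilon=\{h\in\homeo_{\partial}(D):\diam((h^*P)\cap Q)<\epsilon\},
\]
where the outer $\diam$ denotes the supremum of plaque diameters. Since $\bigcap_n V_{1/n}$ is exactly the set of $h$ for which $h^*P$ is cw-transverse to $Q$, the theorem reduces to showing each $V_\epsilon$ is open and dense. Openness I would obtain from an upper-semicontinuity argument parallel to the one establishing openness of $U_\epsilon$ in the proof of Proposition \ref{propCurvaCwTransv}: from any sequence $h_k\to h$ outside $V_\epsilon$ and witnesses $x_k\to y$, upper semicontinuity of $P$, $Q$, and of the pullback construction yields a plaque of diameter $\geq\epsilon$ for $h$ at $y$.

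The density of $V_\epsilon$ is the substantive step. Given $h_0\in\homeo_{\partial}(D)$, I would first replace $P$ by $h_0^*P$ (which remains meagre and cw-transverse to $\partial D$ since $h_0|_{\partial D}=\operatorname{id}$) and thus reduce to $h_0=\operatorname{id}$. I would then cover $D$ by finitely many closed topological disks $D_1,\dots,D_N$ of diameter less than $\epsilon/2$, arranged so that each boundary $\partial D_j$ is cw-transverse to both $P$ and $Q$; such a cover can be produced by starting with standard small round disks and perturbing their boundary loops via Proposition \ref{propCurvaCwTransv}. I would build $h=h_N\circ\cdots\circ h_1$, where each $h_j$ is supported in $D_j$, fixes $\partial D_j$, and is obtained by applying Lemma \ref{lemPertCercaBorde} inside $D_j$ to the currently pulled-back $P$ and to $Q|_{D_j}$; at step $j$ I take $\|h_j-\operatorname{id}\|_{C^0}$ small enough that the positive plaque-to-boundary separations produced at all earlier steps survive. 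After $N$ such perturbations, every plaque of $(h^*P)\cap Q$ avoids the $1$-complex $\bigcup_j\partial D_j$, so by connectedness each plaque lies in a single connected component of $D\setminus\bigcup_j\partial D_j$, and any such component is contained in some $\interior(D_j)$ of diameter less than $\epsilon/2$.

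The step I expect to be the main obstacle is controlling the iterative interference between the $h_j$. Applying $h_j$ could a priori destroy both (i) the cw-transversality of $\partial D_k$ ($k\neq j$) with the current pulled-back decomposition, which is required for Lemma \ref{lemPertCercaBorde} to remain applicable at step $j+1$, and (ii) the positive separation between the intersection plaques and each already-treated boundary $\partial D_k$. For (i), the key is that $h_j$ is the identity outside $D_j$, so the old and new pulled-back decompositions agree on $\partial D_k\setminus D_j$; the behavior in the small sliver $\partial D_k\cap D_j$ can be controlled by a quantitative upper-semicontinuity estimate. For (ii), I would exploit the flexibility of Lemma \ref{lemPertCercaBorde} to take $\|h_j-\operatorname{id}\|_{C^0}$ arbitrarily small, together with upper semicontinuity of the intersection decomposition as a function of $h$, so that the cumulative margin carried forward from earlier steps is never exhausted.
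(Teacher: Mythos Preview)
Your Baire-category framework and the openness argument for $V_\epsilon$ match the paper's proof exactly. The difference is in the density step, and here the paper avoids the obstacle you correctly identify rather than confronting it.

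Instead of an overlapping cover, the paper takes a \emph{triangulation} of $D$ by small topological disks $D_1,\dots,D_l$ with pairwise disjoint interiors. After perturbing (via Proposition~\ref{propCurvaCwTransv}) so that each $\partial D_j$ is cw-transverse to $h^*P$ and to $Q$, it applies Lemma~\ref{lemPertCercaBorde} in every $D_j$ \emph{independently}: each $g_j$ is supported in $D_j$ and fixes $\partial D_j$, so the perturbations do not interact, and one sets $h'(x)=h(g_j(x))$ for $x\in D_j$. There is no iteration and no interference to control; the argument that $h'\in V_\epsilon$ is then a direct computation showing that a plaque of the intersection decomposition through an interior point of $D_j$ cannot reach $\partial D_j$.

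Your iterative scheme with overlapping disks is not obviously salvageable as written. Two specific gaps: first, Lemma~\ref{lemPertCercaBorde} does not produce a quantitative ``positive plaque-to-boundary separation''---it only asserts that plaques through interior points of $D_j$ miss $\partial D_j$, and plaques through boundary points of course meet $\partial D_j$, so there is no uniform margin to preserve. Second, cw-transversality of $\partial D_k$ to the current pulled-back decomposition is not an open condition; an arbitrarily small $h_j$ supported in $D_j$ can create a nondegenerate arc of $\partial D_k\cap D_j$ inside a new plaque, so your upper-semicontinuity sketch for issue~(i) does not close the gap. The triangulation trick is exactly what buys you freedom from both problems.
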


\begin{proof}
 Let $\homeo_\partial(D)$ be the space of homeomorphisms $h\colon D\to D$ with $h(x)=x$ for all $x\in\partial D$ endowed with the $C^0$ topology.
 For $\epsilon>0$ define
 \[
 U_\epsilon=\{h\in\homeo_\partial(D):\diam[(h^*P)\cap Q]<\epsilon\}.
 \]
Let us show that the complement of $U_\epsilon$ is closed in $\homeo_\partial(D)$.
Take $h_k\to h$ with $h_k\notin U_\epsilon$.
That is, there is a sequence $x_k\in D$ such that
if
$$
C_k=\comp_{x_k}([h_k^*P](x_k)\cap Q(x_k))
$$
then
\[
 \diam(C_k)\geq \epsilon \text{ for all }k\geq 1.
\]
As $D$ is compact we can assume that $x_k\to y\in D$.
Also $\C(D)$ is compact with the Hausdorff distance, thus we assume that $C_k$ converges to a continuum $C\subset D$ with $\diam(C)\geq \epsilon$, and due to the semicontinuity of $Q$ we have that $C\subset Q(y)$.
We know that $h_k(C_k)\subset P(h_k(x_k))$, which implies $h(C)\subset P(h(y))$.
Then $C\subset h^*P(y)$, $h\notin U_\epsilon$ and $U_\epsilon$ is open $\homeo_\partial(D)$.

To prove that $U_\epsilon$ is dense in $\homeo_\partial(D)$ take any $h\in \homeo_\partial(D)$ and $\delta>0$.
Take $\gamma\in(0,\epsilon)$ such that $\dist(x,y)<\gamma$ implies $\dist(h(x),h(y))<\delta$ for all $x,y\in D$.
Consider a triangulation of $D$ made of disks $D_1,\dots,D_l$ of diameter less than $\gamma$.
By Proposition \ref{propCurvaCwTransv} we can assume that $\partial D_j$ is cw-transverse to $h^*P$ and $Q$ for all $j=1,\dots,l$.
For each $j=1,\dots,l$ let $h_j\colon D_j\to h(D_j)$ be the restriction of $h$
and define $P_j=h_j^*(P|_{h(D_j)})$ and $Q_j=h_j^*(Q|_{D_j})$.
Applying Lemma \ref{lemPertCercaBorde} to the decompositions $P_j,Q_j$ of $D_j$ we obtain a homeomorphism
$g_j$ of $D_j$ that fixes $\partial D_j$ and satisfies
\begin{equation}
 \label{ecuCondgj}
  [(g_j^*P_j)\cap Q_j](x)\cap \partial D_j=\emptyset \text{ for all } x\in\interior(D_j).
\end{equation}
Define $h'\in\homeo_\partial(D)$ as
$h'(x)=h(g_j(x))$ for $x\in D_j$.
Since $g_j(x)\in D_j$, we have that $h(x), h'(x)\in h(D_j)$.
As $\diam(h(D_j))<\delta$ we have $\dist_{C^0}(h',h)<\delta$.

To prove that $h'\in U_\epsilon$ note that $\diam(D_j)<\gamma<\epsilon$.
We need to prove that
\begin{equation}
 \label{ecuDiamComp}
\diam[(h'^*P)\cap Q](x)
=\diam \comp_x(h'^{-1}(P(h'(x)))\cap Q(x))
<\epsilon.
\end{equation}
If $x\in D_j$ then
\[
 h'^{-1}(P(h'(x)))=
 g_j^{-1}\circ h^{-1}(P(h\circ g_j(x))).
\]
Since $\partial D_j$ is cw-transverse to $h^*P$ and $Q$ we can assume that $x$ is in the interior of $D_j$ (possibly another disk $D_j$ of the triangulation).
Let $z=h\circ g_j(x)$.
Arguing by contradiction suppose that \eqref{ecuDiamComp} is not true, which implies
\[
 \comp_x(g_j^{-1}\circ h^{-1}(P(z))\cap Q(x)) \text{ intersects the boundary of } D_j.
\]
To apply the condition \eqref{ecuCondgj}
notice that
\[
\begin{array}{rl}
(g_j^*P_j)(x)& =g_j^{-1}(P_j(g_j(x)))=g_j^{-1}([h_j^*(P|_{h(D_j)})](g_j(x)))\\
& = g_j^{-1}\circ h_j^{-1}([P|_{h(D_j)}](h_j\circ g_j(x))).\\
\end{array}
\]
Since $z=h_j\circ g_j(x)$ we have
\[
\begin{array}{rl}
(g_j^*P_j)(x)
&=g_j^{-1}\circ h_j^{-1}([P|_{h(D_j)}](z))\\
&=g_j^{-1}\circ h_j^{-1}(\comp_z(P(z)\cap h(D_j))).\\
\end{array}
\]
Condition \eqref{ecuCondgj} now implies that the continuum
\[
[(g_j^*P_j)\cap Q_j](x)=\comp_x[[g_j^{-1}\circ h_j^{-1}(\comp_z(P(z)\cap h(D_j)))]\cap Q_j(x)]
\]
is disjoint from $\partial D_j$. This contradiction proves that $h'\in U_\epsilon$.

Finally the $G_\delta$ dense subset we are looking for can be defined as $\cap_{n\geq 1} U_{1/n}$.
\end{proof}

\section{Dynamical Systems}
\label{secSysDyn}
As before, let $\homeo(M)$ be the space of homeomorphisms of $M$ with the $C^0$ topology.
The homeomorphisms $f\in\homeo(M)$ and $g\in\homeo(N)$ are \textit{semiconjugate} if
there is $\pi\colon M\to N$, continuous and onto, such that $\pi f=g\pi$.
In this case $\pi$ is a \textit{semiconjugacy}.
In what follows we collect some dynamical properties that are transferred from $f$ to $g$.

\begin{prop} [\cite{AAV}*{Proposition 5.4}]
\label{propCocCwExp}
Suppose that $M, N$ are compact metric spaces, $f\in\homeo(M)$, $g\in\homeo(N)$, $\pi\colon M\to N$ is a semiconjugacy and $\pi^{-1}(y)$ is totally disconnected for all $y \in N$.
 Under these conditions, if $f$ is cw-expansive then $g$ is cw-expansive.
\end{prop}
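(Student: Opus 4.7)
The plan is to lift continua from $N$ to $M$, apply cw-expansivity of $f$, and project back, using the hypothesis that the fibers of $\pi$ are totally disconnected to quantitatively compare diameters upstairs and downstairs.

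The key lemma I would establish first is the following: for every $\eta>0$ there is $\delta>0$ such that every subcontinuum $\tilde C\subset M$ with $\diam(\pi(\tilde C))<\delta$ satisfies $\diam(\tilde C)<\eta$. The proof is by contradiction using compactness of the hyperspace $\C(M)$. Given a sequence of continua $\tilde C_k$ with $\diam(\pi(\tilde C_k))\to 0$ and $\diam(\tilde C_k)\geq \eta$, a Hausdorff limit $\tilde C_\infty$ would be a continuum of diameter at least $\eta$ with $\pi(\tilde C_\infty)$ a single point, i.e.\ $\tilde C_\infty$ would be a non-degenerate subcontinuum of some fiber $\pi^{-1}(y)$, contradicting the assumption that fibers are totally disconnected.

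With the key lemma in hand the rest is essentially formal. Let $\epsilon_0>0$ be a cw-expansive constant for $f$, and let $\delta_0$ be the $\delta$ given by the lemma for $\eta=\epsilon_0$. Given any subcontinuum $C\subset N$ with more than one point, I would invoke the standard continuum-lifting result (for a continuous surjection of compact metric spaces, every subcontinuum of the target is the image of some subcontinuum of the source, proved via a Zorn's-lemma minimality argument on closed subsets of $\pi^{-1}(C)$ projecting onto $C$) to obtain a subcontinuum $\tilde C\subset M$ with $\pi(\tilde C)=C$. Since $C$ has more than one point and $\pi(\tilde C)=C$, also $\tilde C$ is non-degenerate. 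By cw-expansivity of $f$, there is $n\in\Z$ with $\diam(f^n(\tilde C))>\epsilon_0$; by the contrapositive of the key lemma applied to $f^n(\tilde C)$, we conclude $\diam(\pi(f^n(\tilde C)))\geq \delta_0$. The semiconjugacy identity $\pi f^n=g^n\pi$ gives $\pi(f^n(\tilde C))=g^n(C)$, so $\diam(g^n(C))\geq \delta_0$, and $\delta_0$ is a cw-expansive constant for $g$.

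The main obstacle is the key lemma, which is really where the content lies: it isolates the fact that the hypothesis ``fibers totally disconnected'' is precisely what forces continua of positive diameter in $M$ to project to sets of positive diameter in $N$, in a uniform way thanks to compactness of $\C(M)$. Once this is done, the continuum-lifting step and the semiconjugacy identity glue everything together with no further surprise.
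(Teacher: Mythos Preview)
The paper does not supply its own proof of this proposition; it simply quotes the statement from \cite{AAV}*{Proposition 5.4} and uses it as a black box. Your argument is correct and is essentially the expected proof: the compactness-of-$\C(M)$ lemma turning ``fibers totally disconnected'' into a uniform diameter estimate, the classical lifting of continua through a surjection between compacta, and the semiconjugacy identity to push the conclusion down to $N$ are exactly the ingredients one uses. There is nothing to compare against here beyond noting that your write-up would serve perfectly well as the missing proof.
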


We say that $\Lambda\subset M$ is $\pi$-\textit{saturated} if $\pi^{-1}(\pi(\Lambda))=\Lambda$.
If $\pi\colon M\to N$ is open and $\Lambda\subset M$ is $\pi$-saturated then 
$\pi\colon \Lambda\to\pi(\Lambda)$ is open.
This and the previous proposition implies the next result.

\begin{cor}
\label{corCocCwExp}
Assuming the hypothesis of the previous proposition, if $\pi$ is open and $\Lambda\subset M$ is $\pi$-saturated, closed, $f$-invariant and $f\colon\Lambda\to\Lambda$ is cw-expansive then $g\colon\pi(\Lambda)\to \pi(\Lambda)$ is cw-expansive.
\end{cor}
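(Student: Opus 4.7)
The plan is to reduce the statement to a direct application of Proposition \ref{propCocCwExp}, applied not to the ambient systems but to their restrictions to the invariant saturated pair $(\Lambda,\pi(\Lambda))$. First I would set up the restricted systems. Since $\Lambda$ is closed in the compact space $M$, it is itself compact, and $f(\Lambda)=\Lambda$ gives a well-defined homeomorphism $f|_\Lambda\in\homeo(\Lambda)$. Analogously, $\pi(\Lambda)$ is compact as the continuous image of a compact set, and combining the semiconjugacy relation $\pi f=g\pi$ with $f$-invariance of $\Lambda$ yields
\[
g(\pi(\Lambda))=\pi(f(\Lambda))=\pi(\Lambda),
\]
so $g|_{\pi(\Lambda)}\in\homeo(\pi(\Lambda))$ is also well defined. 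The restricted map $\pi|_\Lambda\colon\Lambda\to\pi(\Lambda)$ is continuous and surjective by construction, and inherits the intertwining relation from $\pi$, so it is a semiconjugacy between $f|_\Lambda$ and $g|_{\pi(\Lambda)}$.

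Next I would check that the fibers of this restricted semiconjugacy remain totally disconnected, which is the only hypothesis of Proposition \ref{propCocCwExp} that requires genuine use of the assumptions. For any $y\in\pi(\Lambda)$,
\[
(\pi|_\Lambda)^{-1}(y)=\pi^{-1}(y)\cap\Lambda.
\]
Because $\Lambda$ is $\pi$-saturated, $\pi^{-1}(y)\subset\pi^{-1}(\pi(\Lambda))=\Lambda$, hence $(\pi|_\Lambda)^{-1}(y)=\pi^{-1}(y)$, which is totally disconnected by the standing hypothesis. All hypotheses of Proposition \ref{propCocCwExp} are therefore satisfied for $f|_\Lambda$ and $g|_{\pi(\Lambda)}$, and cw-expansivity transfers from the former to the latter.

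The remark just before the corollary, asserting that openness of $\pi$ together with $\pi$-saturation of $\Lambda$ makes $\pi|_\Lambda\colon\Lambda\to\pi(\Lambda)$ open, is mentioned to guarantee that the restriction is a topologically honest quotient map, and is the reason the openness hypothesis appears in the statement. Once this is granted, the argument is entirely a bookkeeping of restrictions and I do not foresee a significant obstacle; the only point requiring care is the fiber identity $(\pi|_\Lambda)^{-1}(y)=\pi^{-1}(y)$, which depends essentially on the $\pi$-saturation of $\Lambda$ and would fail without it.
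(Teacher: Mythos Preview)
Your proof is correct and follows exactly the approach the paper indicates in the sentence preceding the corollary: restrict everything to the pair $(\Lambda,\pi(\Lambda))$ and invoke Proposition~\ref{propCocCwExp}. One minor remark: the fiber identity $(\pi|_\Lambda)^{-1}(y)=\pi^{-1}(y)$ does use saturation, but total disconnectedness of $(\pi|_\Lambda)^{-1}(y)=\pi^{-1}(y)\cap\Lambda$ would follow anyway as a subset of a totally disconnected set, so neither saturation nor openness is truly essential for the cw-expansivity transfer itself.
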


A closed $f$-invariant subset $X\subset M$ is $f$-\textit{isolated} if there is 
a neighborhood $V$ of $X$ such that $f^n(x)\in U$ for all $n\in\Z$ implies $x\in X$. 
In this case, $V$ is an \textit{isolating neighborhood} of $X$.

\begin{prop}
\label{propCocienteIso}
Suppose that $M, N$ are compact metric spaces, $f\in\homeo(M)$, $g\in\homeo(N)$, $\pi\colon M\to N$ is an open semiconjugacy and $\Lambda\subset M$ is $\pi$-saturated and $f$-isolated then $\pi(\Lambda)$ is $g$-isolated.
\end{prop}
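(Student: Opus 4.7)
The strategy is to manufacture an isolating neighborhood of $\pi(\Lambda)$ in $N$ from one for $\Lambda$ in $M$. First I would check that $\pi(\Lambda)$ is a legitimate candidate: it is closed, as the continuous image of the compact set $\Lambda\subset M$, and it is $g$-invariant since $g(\pi(\Lambda))=\pi(f(\Lambda))=\pi(\Lambda)$ by the semiconjugacy relation and the $f$-invariance of $\Lambda$.

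Next I would construct a candidate neighborhood. Let $V\subset M$ be an isolating neighborhood of $\Lambda$. Because $M\setminus V$ is compact, $\pi(M\setminus V)$ is compact and hence closed in $N$, so
$$W=N\setminus\pi(M\setminus V)$$
is open in $N$. The $\pi$-saturation of $\Lambda$ is used exactly once, to ensure $\pi(\Lambda)\cap\pi(M\setminus V)=\emptyset$: if some $y$ lay in both sets, then any preimage $x\in M\setminus V$ of $y$ would satisfy $x\in\pi^{-1}(\pi(\Lambda))=\Lambda\subset V$, a contradiction. Therefore $\pi(\Lambda)\subset W$, and by construction $\pi^{-1}(W)\subset V$.

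To verify that $W$ is an isolating neighborhood of $\pi(\Lambda)$, suppose $y\in N$ satisfies $g^n(y)\in W$ for every $n\in\Z$. By surjectivity of $\pi$ choose $x\in M$ with $\pi(x)=y$. The semiconjugacy gives $\pi(f^n(x))=g^n(y)\in W$, so $f^n(x)\in\pi^{-1}(W)\subset V$ for all $n\in\Z$. The isolating property of $V$ then forces $x\in\Lambda$, whence $y=\pi(x)\in\pi(\Lambda)$, as desired.

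The argument is short and the only nontrivial point is arranging the $\pi$-saturation hypothesis to yield $\pi(\Lambda)\cap\pi(M\setminus V)=\emptyset$; I expect no serious obstacle. Incidentally, openness of $\pi$ does not seem to enter this particular proof (it is used in Corollary \ref{corCocCwExp}), so the hypothesis is likely stated here for uniformity with the preceding results of the section.
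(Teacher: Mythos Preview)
Your proof is correct and follows the same underlying strategy as the paper: produce an open set $U\subset N$ containing $\pi(\Lambda)$ with $\pi^{-1}(U)\subset V$, and then lift an orbit trapped in $U$ through the semiconjugacy. The paper asserts the existence of such $U$ somewhat abstractly (first noting that $\pi(V)$ is a neighborhood because $\pi$ is open, then invoking continuity and $\pi$-saturation), whereas you construct it explicitly as $W=N\setminus\pi(M\setminus V)$, using only compactness of $M\setminus V$ and saturation. One small point: you implicitly assume $V$ is open so that $M\setminus V$ is compact; this is harmless, since one may always replace an isolating neighborhood by its interior.

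Your closing observation is also correct and worth recording: the openness of $\pi$ is not actually needed for this proposition. In the paper's argument it is used only to conclude that $\pi(V)$ is a neighborhood of $\pi(\Lambda)$, an intermediate step your construction bypasses entirely. So your route is slightly more economical in hypotheses, even though the two proofs are otherwise the same in spirit.
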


\begin{proof}
To prove that $\pi(\Lambda)$ is $g$-isolated let $V$ be an isolating neighborhood of $\Lambda$. 
Since $\pi$ is open, $\pi(V)$ is a neighborhood of $\pi(\Lambda)$. 
By the continuity of $\pi$ and the fact that $\Lambda$ is $\pi$-saturated, we can take an open set $U\subset N$ such that $\pi(\Lambda)\subset U\subset \pi(V)$ and $\pi^{-1}(U)\subset V$. 
In this way, $U$ is an isolating neighborhood of $\pi(\Lambda)$ and the proof ends.
\end{proof}

\subsection{Almost cw-expansivity}
A map $\pi\colon M\to N$ is \textit{monotone} if $\pi^{-1}(y)\subset M$ is connected for all $y\in N$.
Define
\[
 \mesh(\pi)=\sup_{y\in N} \diam(\pi^{-1}(y)).
\]

\begin{df}
We say that $f\in\homeo(M)$ is \emph{almost cw-expansive} if for all $\epsilon>0$
there are a compact metric space $N$, $g\in\homeo(N)$ cw-expansive and a monotone semiconjugacy
$\pi\colon M\to N$ such that $\mesh(\pi)<\epsilon$.
\end{df}

\begin{rmk}
If $f$ is cw-expansive, given any $\epsilon>0$ as in the previous definition 
we can take $N=M$, $g=f$ and $\pi$ as the identity.
Thus, every cw-expansive homeomorphism is almost cw-expansive.
\end{rmk}

\begin{prop}
\label{propCharACE}
For $f\in\homeo(M)$ the following are equivalent:
\begin{enumerate}
 \item $f$ is almost cw-expansive,
 \item for all $\epsilon>0$ there is an $f$-invariant equivalence relation $\sim$ with connected and upper semicontinuous clasess, such that each class has diameter less than $\epsilon$ and whose quotient homeomorphism is cw-expansive.
\end{enumerate}
\end{prop}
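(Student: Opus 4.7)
My plan is to prove the two implications separately, with the bulk of the content being bookkeeping that unpacks the definitions; the only genuinely delicate point is making sure that an upper semicontinuous decomposition with connected classes has a compact metrizable quotient, which I intend to cite as a standard fact from the theory of decompositions.

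For (1) $\Rightarrow$ (2) the plan is to start from an $\epsilon$ and the data $(N,g,\pi)$ supplied by almost cw-expansivity, and define $x\sim y$ iff $\pi(x)=\pi(y)$. The equivalence classes are the fibers $\pi^{-1}(\pi(x))$, which are connected because $\pi$ is monotone, have diameter less than $\epsilon$ by the mesh bound, and form an upper semicontinuous partition because $\pi$ is continuous onto a Hausdorff space (continuity of $\pi$ forces the fiber partition to be upper semicontinuous). The relation is $f$-invariant since $\pi f=g\pi$ gives $\pi(f(x))=g\pi(x)=g\pi(y)=\pi(f(y))$ whenever $x\sim y$. The canonical identification $M/{\sim}\cong N$ makes the induced quotient homeomorphism equal to $g$, which is cw-expansive by hypothesis.

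For (2) $\Rightarrow$ (1) the plan is to reverse this. Given $\epsilon>0$ and the relation $\sim$, form the quotient $N=M/{\sim}$ with quotient map $\pi\colon M\to N$. Because the classes form an upper semicontinuous decomposition of a compact metric space, $N$ is a compact metric space (this is the standard Moore-type fact I will invoke, e.g.\ via a Hausdorff-metric style metrization of the decomposition). The $f$-invariance of $\sim$ gives a well defined homeomorphism $g\colon N\to N$ with $\pi f=g\pi$, which is cw-expansive by hypothesis. The map $\pi$ is monotone because each class is connected, and $\mesh(\pi)<\epsilon$ by the diameter hypothesis on classes; hence $f$ is almost cw-expansive.

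The step I expect to require the most care is the verification in (2) $\Rightarrow$ (1) that the quotient $N=M/{\sim}$ is actually a compact metric space, since the definition of almost cw-expansivity explicitly requires $N$ to be one. I plan to handle this by noting that the partition map $x\mapsto [x]_\sim$ is, by the hypothesis of (2), an upper semicontinuous decomposition of $M$ with values in $\C(M)$; and then appeal to the classical result that the quotient by such a decomposition of a compact metric space is again compact and metrizable (as used in \cite{Dend} and in the setup of Section~\ref{secDecompositions}). Everything else is a direct translation between semiconjugacies and $f$-invariant upper semicontinuous decompositions.
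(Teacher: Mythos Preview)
Your proposal is correct and follows exactly the same approach as the paper: for $1\Rightarrow 2$ you define $x\sim y$ iff $\pi(x)=\pi(y)$, and for $2\Rightarrow 1$ you take the quotient $N=M/{\sim}$ with the projection as semiconjugacy. The paper's proof is terse and explicitly leaves the remaining details to the reader, whereas you have spelled out precisely those details (upper semicontinuity of fibers, $f$-invariance, metrizability of the quotient), so your version is simply a fleshed-out execution of the paper's sketch.
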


\begin{proof}
 To prove that $2 \Rightarrow 1$ notice that the quotient space can be taken as $N$ and the projection as the semiconjugacy.
 For $1 \Rightarrow 2$, given such semiconjugacy $\pi$ define the equivalence relation on $M$ as $x\sim y$ if $\pi(x)=\pi(y)$.
The remaining details of the proof are left to the reader.
\end{proof}

\subsection{A residual set}
\label{secResidual}
In the set of homeomorphisms of $M$ we consider the $C^0$ topology.
Let us show a natural $G_\delta$ set (a countable intersection of open sets) of almost cw-expansive homeomorphisms for any compact metric space $M$.
Given $\expc>0$ define the set
\[
E_\expc=\left\{
f\in\homeo(M):
\sup_{i\in\Z}\diam f^i(C)\leq\expc\Rightarrow
\sup_{i\in\Z}\diam f^i(C)<\frac\expc 2,
\forall C\in \C(M)
\right\}.
\]
For $f\in E_\expc$ define the equivalence relation $x\sim y$ if there is a continuum $C\subset M$ such that $x,y\in C$ and $\diam(f^i(C))\leq\expc$ for all $i\in\Z$.
By \cite{AAV}*{Theorem 2.12} (see also \cite{FG}*{Theorem 2.11}) the homeomorphism $g$ induced in the (compact metric) quotient space $N=M/\sim$
is cw-expansive.
Also, the quotient map $\pi$ is monotone (by definiton) and $\mesh(\pi)< \expc/2$ (by the definition of the set $E_\alpha$).

By \cite{AAV}*{Theorem A} (essentially \cite{FG}*{Lemma 2.1}) we have that each $E_\expc$ is open in the space of 
homeomorphisms of $M$ with the $C^0$ topology. 
%
For $n\in\N$ define
\begin{equation}
\label{ecuFn}
F_n=\cup_{0<\expc<1/n} E_\expc.
\end{equation}
Since each $E_\expc$ is open, we have that $F_n$ is open and
$$G=\cap_{n\in\N} F_n$$
is $G_\delta$ and from the construction and previous remarks, each $f\in G$ is almost cw-expansive.
\begin{rmk}
\label{rmkCwDenseAlmcwexpGen}
If the set of cw-expansive homeomorphisms of $M$ is dense then each $F_n$ is dense and $G$ is a dense $G_\delta$ subset. Thus, for such a space the property of almost cw-expansivity is generic.
\end{rmk}

Of course, in general the set $G$ could be empty, depending on the space $M$. Let us show some examples.

\subsection{Examples}

It is known that the arc \cite{Bryant54}, the circle \cite{JU} and dendrtites \cite{Kato902} do not admit cw-expansive homeomorphisms.
If $M$ is an arc or ar circle and $\epsilon<\diam(M)$ then any quotient as in Proposition \ref{propCharACE} is again homeomorphic to $M$.
For $M$ a dendrite such quotient gives again a dendrite.
Thus, theses spaces do not admit almost cw-expansive homeomorphisms.

Let us give an example of a space $M$ with almost cw-expansive homeomorphisms but not supporting cw-expansivity. For each integer $n\geq 1$ consider 
two-dimensional spheres in three-dimensional space, tangent to the $x=0$
plane at the origin and with radius $1/n$
$$ S_n=\{(x,y,z)\in\R^3:(x-1/n)^2+y^2+z^2=1/n^2\} $$
and define
$$X=\cup_{n\geq 1} S_n.$$

\begin{prop}
 The space $X$ admits no cw-expansive homeomorphism.
\end{prop}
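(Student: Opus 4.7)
My plan is to exploit the rigid topological structure of $X$. The origin is the unique point at which $X$ fails to be a $2$-manifold: any neighborhood of $0$, minus $0$, has infinitely many connected components (one per small sphere $S_n$ it contains), whereas every other point $p\in S_n\setminus\{0\}$ has a Euclidean neighborhood in $X$ (take a small $\R^3$-ball around $p$ disjoint from the other spheres). Hence any homeomorphism $f\in\homeo(X)$ must fix $0$ and permute the connected components of $X\setminus\{0\}$, which are precisely the punctured spheres $S_n\setminus\{0\}$. This yields a permutation $\sigma$ of $\N$ with $f(S_n)=S_{\sigma(n)}$ for all $n$.

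Assuming for contradiction that $f$ is cw-expansive with constant $c>0$, I would set $F=\{n\in\N : 2/n>c\}$, which is finite. Applying cw-expansivity to each continuum $S_n$, some iterate $f^i(S_n)=S_{\sigma^i(n)}$ has diameter $>c$, forcing $\sigma^i(n)\in F$; so every $\sigma$-orbit meets $F$. If all orbits were finite, then distinct orbits contribute distinct elements of $F$, so there would be at most $|F|$ of them and $\N$ itself would be finite, a contradiction. Hence some orbit $O$ is infinite, and I pick $m\in O\cap F$. Since $O$ is bi-infinite and $F$ is finite, the set $J=\{i\in\Z : \sigma^i(m)\in F\}$ is finite.

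Then uniform continuity closes the argument: each $f^i$ with $i\in J$ is uniformly continuous on the compact space $X$, so I can choose $\delta>0$ such that $\diam(A)<\delta$ implies $\diam(f^i(A))<c$ for every $i\in J$. Picking any non-degenerate arc $\alpha\subset S_m$ with $\diam(\alpha)<\delta$, I would have $\diam(f^i(\alpha))<c$ for $i\in J$, while for $i\notin J$ the sphere $S_{\sigma^i(m)}$ containing $f^i(\alpha)$ already has diameter $2/\sigma^i(m)\leq c$. Thus $\sup_i\diam(f^i(\alpha))\leq c$, contradicting cw-expansivity. I expect the delicate point to be the pigeonhole producing an infinite orbit: the periodic-orbit case alone would not yield a contradiction (the $2$-sphere does admit cw-expansive homeomorphisms), so it is crucial that finite orbits cannot exhaust $\N$.
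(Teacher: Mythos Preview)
Your proof is correct and follows essentially the same route as the paper: both arguments observe that any homeomorphism fixes the origin and permutes the spheres, then split according to whether the induced permutation has only finite orbits (your pigeonhole against the finite set $F$ is exactly the paper's observation that a cycle of small spheres must occur) or has an infinite orbit (where your explicit uniform-continuity choice of $\delta$ over the finite set $J$ spells out what the paper abbreviates as ``take a small subcontinuum of $S_m$, with small iterates'').
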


\begin{proof}
If $f$ is a homeomorphism of the space $X$ then it fixes de origin an the spheres are permuted. 
If every sphere is periodic, \textit{i.e.} for each $n\geq 1$ there is $k\geq 1$ such that $f^k(S_n)=S_n$, 
considering that for each $\epsilon>0$ there is just a finite number of spheres with diameter larger than $\epsilon$, we conclude that there must be a cycle of small spheres. This implies that $f$ cannot be cw-expansive.

If $f$ has a wandering sphere, \textit{i.e.} a sphere $S_m$ such that $f^k(S_m)\cap S_m=\{0\}$ for all $k>0$, 
then it is easy to see that $\diam(f^k(S_m))\to 0$ as $k\to\pm\infty$. Thus, we can take a small subcontinuum of $S_m$, with small iterates. This also implies that $f$ cannot be cw-expansive.
\end{proof}

In the next result we suppose that each sphere is invariant with an arbitrary cw-expansive dynamics, for instance we can assume that $f\colon S_n\to S_n$ is conjugate to a pseudo-Anosov map with spines.
This homeomorphism is not cw-expansive in $X$ but we will see that it is almost cw-expansive.

\begin{prop}
 If $f\in\homeo(X)$ is so that $f\colon S_n\to S_n$ is cw-expansive then
 $f$ is almost cw-expansive.
\end{prop}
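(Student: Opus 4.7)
The plan is to invoke Proposition \ref{propCharACE}: for each $\epsilon>0$ I will build an $f$-invariant equivalence relation on $X$ with connected, upper semicontinuous classes of diameter less than $\epsilon$ whose quotient homeomorphism is cw-expansive. The natural idea is to collapse to a single point the union of all spheres that are already small enough to fit, together with the origin, inside a ball of diameter less than $\epsilon$.

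Given $\epsilon>0$, I would choose $n_0$ with $4/n_0<\epsilon$ and set $Y=\bigcup_{n\geq n_0}S_n$. Since every point of $S_n$ is within $2/n$ of the origin, $\diam(Y)\leq 4/n_0<\epsilon$; moreover $Y$ is closed (the spheres $S_n$ accumulate only on the origin, which already lies in $Y$), connected (they all share the origin), and $f$-invariant by hypothesis. Declaring $x\sim y$ iff $x=y$ or $\{x,y\}\subset Y$ produces a partition whose only non-singleton class is $Y$, so the classes are connected of diameter less than $\epsilon$, and upper semicontinuity is immediate from $Y$ being closed and the other classes being singletons.

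Let $\pi\colon X\to N$ be the quotient map and $g$ the induced homeomorphism. Topologically $N=S_1\vee\cdots\vee S_{n_0-1}$, the finite wedge of the remaining spheres at the common point $p=\pi(Y)$, and via $\pi$ the map $g|_{S_k}$ is conjugate to $f|_{S_k}$, hence cw-expansive with some constant $\epsilon_k>0$. I expect the main obstacle to be checking that $g$ is cw-expansive on the whole wedge and not merely on each sphere separately; I would do this by verifying that $\epsilon^{*}=\min_{1\leq k\leq n_0-1}\epsilon_k$ is a cw-expansivity constant for $g$. If a nondegenerate continuum $C\subset N$ is contained in a single $S_k$, then cw-expansivity of $g|_{S_k}$ directly produces an iterate of diameter greater than $\epsilon^{*}$. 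Otherwise, since the connected components of $N\setminus\{p\}$ are the punctured spheres, $p$ must belong to $C$ and $C$ meets some $S_k$ in more than one point; the standard fact that if $N=A\cup B$ with $A,B$ closed and $A\cap B=\{p\}$, then any connected subset of $N$ containing $p$ intersects $A$ in a connected set, yields that $C\cap S_k$ is a nondegenerate subcontinuum of $S_k$. Since $S_k$ is $g$-invariant, $\diam g^i(C)\geq \diam g^i(C\cap S_k)>\epsilon^{*}$ for some $i$, finishing the cw-expansivity of $g$. Proposition \ref{propCharACE} then concludes that $f$ is almost cw-expansive.
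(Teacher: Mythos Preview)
Your proof is correct and follows essentially the same approach as the paper: collapse the tail of small spheres $\bigcup_{n\geq n_0}S_n$ to a point and observe that the quotient homeomorphism on the resulting finite wedge is cw-expansive. If anything you are more careful than the paper, which simply asserts the cw-expansivity of the quotient from that of each $f|_{S_n}$, whereas you actually supply the argument via the minimum constant and the connectedness of $C\cap S_k$.
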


\begin{proof}
For $n>1$ let $\pi_n\colon X\to Y_n=\cup_{i=1}^{i=n}S_n$ be defined as 
$\pi_n(x)=x$ if $x\in S_m$ with $1\leq m\leq n$ and 
$\pi_n(x)=0$ if $x\in S_m$ with $m>n$. 
That is, $\pi_n$ collapses all the spheres $S_m$ with $m>n$ to the origin.
On the remaining spheres define $g_n\colon Y_n\to Y_n$ as the restriction of $f$, 
\textit{i.e.} $g_n(x)=f(x)$ for all $x\in Y_n$. 
Since $f$ is cw-expansive on each $S_n$, we have that $g_n$ is cw-expansive. 
By construction we have $\mesh(\pi_n)\to 0$ as $n\to\infty$. Since $\pi_n$ is a semiconjugacy for $f$ and $g_n$ we conculde that $f$ is almost cw-expansive.
\end{proof}

\begin{rmk}
From the result that will follow in this paper, it can be proved that 
for this space $X$ the set of almost cw-expansive homeomorphisms is dense in the $C^0$ topology.
\end{rmk}

\section{Dynamics on surfaces}
\label{secDynSurf}

In this section we prove the main results of the article.
	Let us first give some ideas about the proofs. To prove the density of cw-expansive homeomorphisms we use that structurally stable diffeomorphisms are $C^0$ dense. It is known that a cw-expansive homeomorphism cannot have stable points, thus we need a small perturbation around attracting and repelling periodic points. For this purpose we combine two well known constructions. On the one hand, we consider the derived from Anosov diffeomorphism, that gives rise to an expanding attractor on the two-torus. 
On the other hand, we have the antipodal quotient of the two-torus giving rise to a pseudo-Anosov map with 1-prong singularities on the two-sphere. From these examples we obtain an expanding attractor with 1-prong singularities which is contained in a disk. Each attracting and repelling periodic orbit is replaced by this expanding attractor, which is cw-expansive. At this point each stable and unstable manifold is one-dimensional. To obtain cw-expansivity we need another perturbation such that the intersections of local stable and unstable sets are totally disconnected. Such results on perturbations were collected in \S\ref{secDecompositions}, the key is Theorem \ref{thmPertCercaBorde}.
To prove the results about genericity of almost cw-expansivity we mainly apply known results from \cite{AAV}.

In this section we assume that $M$ is a smooth, compact surface without boundary.

\subsection{Derived from pseudo-Anosov homeomorphism}
\label{secDPA}

Let us recall the definition of the derived from Anosov (DA) diffeomorphism \cite{Robinson}*{\S 7.8}.
It starts with the linear Anosov diffeomorphism of the two-dimensional torus $\T^2$ given by $g(x,y)=(2x+y,x+y)$.
If $v^s$ and $v^u$ are the stable and unstable eigenvectors we use the coordinates $u_1 v^u+u_2 v^s$ in a small ball $U$ of radius $r_0>0$ centered at the origin.
Let $\delta\colon\R\to[0,1]$ be a bump function such that $\delta(x)=0$ for $x\geq r_0$ and
$\delta(x)=1$ for $x\leq r_0/2$.

Consider the vector field on the torus defined by
$$v(u_1,u_2)=(0,u_2\delta(\sqrt{u_1^2+u_2^2}))$$
and $v=0$ outside $U$.
\begin{rmk}
Note that $v(-u_1,-u_2)=-v(u_1,u_2)$ for all $u_1,u_2$.
\end{rmk}

If $\phi$ is the flow induced by $v$ then de DA is $f_{DA}=\phi_\tau\circ g$ for suitable $\tau$.
We do not develope the details, for which we refer the reader to \cite{Robinson}, but, for example,
$\tau$ has to be large enough to make the origin a source.
From the construction it is clear that
the stable foliation of the Anosov diffeomorphism $g$ is also invariant for $f_{DA}$.

\begin{thm}[\cite{Robinson}*{Theorem 8.1}]
 The derived from Anosov diffeomorphism $f_{DA}$ has non-wandering set $\Omega(f_{DA})=\{p_0\}\cup\Lambda$ where $p_0$ is a source
 and $\Lambda$ is a hyperbolic expanding attractor of topological dimension one. The map $f$ is transitive on $\Lambda$
 and the periodic points are dense in $\Lambda$.
\end{thm}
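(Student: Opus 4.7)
The plan is to identify $\Lambda$ as the maximal $f_{DA}$-invariant set in $\T^2\setminus U_0$ for a small open disk $U_0$ around the origin, and then to verify the listed properties in turn, using the local product structure inherited from the linear Anosov $g$.

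First I would check that the origin $p_0=0$ is a source. In the coordinates $(u_1,u_2)$ adapted to the eigenvectors $v^u,v^s$ of $g$, on the ball of radius $r_0/2$ one has $\delta\equiv 1$ so $v(u_1,u_2)=(0,u_2)$. Hence the time-$\tau$ flow is $\phi_\tau(u_1,u_2)=(u_1,e^{\tau}u_2)$ near the origin and $Df_{DA}(0)=\mathrm{diag}(\lambda^u,\,e^{\tau}\lambda^s)$. Choosing $\tau$ large enough so that $e^{\tau}\lambda^s>1$ makes both eigenvalues expanding, so $p_0$ is a source. Take $U_0$ a small open disk around $p_0$ with $\overline{U_0}\subset f_{DA}(U_0)$ and define $\Lambda=\bigcap_{n\ge 0}f_{DA}^n(\T^2\setminus U_0)$. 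By construction $\Lambda$ is compact, $f_{DA}$-invariant, attracting, and disjoint from $p_0$, while any $x\notin\Lambda\cup\{p_0\}$ is wandering, giving $\Omega(f_{DA})\subset\{p_0\}\cup\Lambda$.

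Second, I would prove hyperbolicity on $\Lambda$. Outside $U$, where $f_{DA}=g$, the Anosov splitting works directly. Inside $U\setminus U_0$, the direction $v^u$ is preserved because $v$ is parallel to $v^s$, and $Df_{DA}$ still expands $v^u$ by $\lambda^u$. For the contracting direction one invokes a cone-field criterion: a narrow cone about $v^s$ is $Df_{DA}^{-1}$-invariant and uniformly contracted on $\Lambda$, provided $r_0$ and $\tau$ are chosen compatibly so that the modified vertical expansion $e^{\tau}\lambda^s$ only dominates on a neighborhood of the origin contained in $U_0$. This is the DA construction of \cite{Robinson}*{\S 7.8}. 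Topological dimension one then follows from Williams' theory of expanding attractors: $\Lambda$ is locally the product of an unstable arc with a Cantor transversal.

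Third, transitivity on $\Lambda$ and density of periodic points follow by the standard hyperbolic package: the $g$-stable foliation remains $f_{DA}$-invariant, the unstable manifolds of points of $\Lambda$ are dense (inherited from mixing of $g$ combined with the $\lambda$-lemma), and the shadowing lemma upgrades this to density of periodic points and transitivity. Together with density of periodic orbits in $\Lambda$ and the fact that $p_0$ is fixed, the reverse inclusion $\{p_0\}\cup\Lambda\subset\Omega(f_{DA})$ is immediate. The main obstacle is the hyperbolicity estimate in the transition annulus $U\setminus U_0$, where one must produce an invariant stable cone surviving the vertical bump; this is the technical heart of the argument and the reason to cite \cite{Robinson}*{Theorem 8.1} rather than reprove it here.
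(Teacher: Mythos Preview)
The paper does not prove this theorem at all: it is stated as a citation of \cite{Robinson}*{Theorem 8.1} and used as a black box in the construction of the pseudo-DA map in \S\ref{secDPA}. There is therefore no ``paper's own proof'' to compare against; the paper simply imports the result.

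Your sketch is a faithful outline of the standard argument as it appears in Robinson's textbook: the source at $p_0$ via the linearization $Df_{DA}(0)=\mathrm{diag}(\lambda^u,e^\tau\lambda^s)$, the definition of $\Lambda$ as a nested intersection, the cone-field verification of hyperbolicity in the transition annulus, and then the hyperbolic package (local product structure, shadowing, spectral decomposition) for transitivity and density of periodic points. You are right that the delicate step is the stable cone in $U\setminus U_0$, and you correctly flag that this is the reason the paper cites Robinson rather than reproving it. One minor point: your claim that topological dimension one follows from ``Williams' theory of expanding attractors'' is a slight overreach as stated --- Williams' theory applies once you already know $\Lambda$ is an expanding hyperbolic attractor with one-dimensional unstable bundle, and the dimension statement is really just the observation that $\Lambda$ is locally an arc times a Cantor set, which follows directly from the local product structure and the fact that stable manifolds are one-dimensional and intersect $\Lambda$ in a totally disconnected set. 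But this is a matter of attribution, not a gap.
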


\begin{rmk}[Antipodal relation and the quotient sphere]
\label{rmkCondPi}
On $\T^2$ we have the antipodal equivalence relation $p\sim -p$. The quotient space is a two-dimensional sphere that we will denote as $\Sph^ 2$. 
Let $\pi\colon\T^2\to\Sph^2$ be the quotient map. The map $\pi$ has four branching points that can be characterized by $p\sim-p$. These point are obtained by solving $p-(-p)=2p\in\Z^2$, or in coordinates $p=(x,y)$, $x,y\in \frac12\Z$. This gives 4 solutions $p_0=(0,0)$, $p_1=(\frac12,0)$, $p_2=(0,\frac12)$ and $p_3=(\frac12,\frac12)$. 
In particular we see that for any $u\in\Sph^2$, the set $\pi^{-1}(u)$ has at most 2 points and each $\pi^{-1}(u)$
is totally disconnected. We also need to remark that $\pi$ is open, which is easy to prove.
\end{rmk}

 Since $X(-p)=-X(p)$ we have that $\phi_\tau(-p)=-\phi_\tau(p)$.
 Thus, $f_{DA}(-p)=-f_{DA}(p)$ for all $p\in \T^2$.
That is, we have an induced map in the quotient that will be denoted as 
$f_{PDA}\colon\Sph^2\to\Sph^2$. 
It will be called \textit{pseudo-DA} homeomorphism of the two-dimensional sphere.
For $p\in\T^2$ let $\tilde p=\pi(p)$.
For the branching points of $\pi$ we have:
$\tilde p_0$ is a repelling fixed point
 and $\tilde p_1$, $\tilde p_2$, $\tilde p_3$ define a period 3 orbit for $f_{PDA}$.
 The points $\tilde p_1,\tilde p_2,\tilde p_3$ have 1-prong stable and unstable arcs.
The fixed point $\tilde p_0$ in the sphere is a repelling fixed point and the non-wandering set of $f_{PDA}$ has another piece $\tilde\Lambda\subset\Sph^2$ containing the perido 3 orbit.

\begin{prop}
\label{propPDAcwexpIso}
For $f_{PDA}$ the piece $\tilde\Lambda$ is cw-expansive and isolated.\footnote{In fact it is positively cw-expansive and an attractor.}
\end{prop}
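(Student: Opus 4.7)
The plan is to derive both conclusions as direct applications of the abstract results already established: Corollary \ref{corCocCwExp} for cw-expansivity and Proposition \ref{propCocienteIso} for isolation. I apply them with $M=\T^2$, $N=\Sph^2$, $f=f_{DA}$, $g=f_{PDA}$ and $\pi$ the antipodal quotient of Remark \ref{rmkCondPi}.

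First I would verify the hypotheses on $\pi$. That $\pi$ semiconjugates $f_{DA}$ to $f_{PDA}$ is built into the definition of $f_{PDA}$ and rests on the symmetry $f_{DA}(-p)=-f_{DA}(p)$ noted in Section \ref{secDPA}. By Remark \ref{rmkCondPi}, $\pi$ is open and each fibre $\pi^{-1}(u)$ has at most two points, hence is totally disconnected.

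The key step is to check that $\Lambda$ is $\pi$-saturated, i.e.\ $\sigma(\Lambda)=\Lambda$ for $\sigma(p)=-p$. Since $\sigma$ commutes with $f_{DA}$, it preserves the non-wandering set $\Omega(f_{DA})=\{p_0\}\cup\Lambda$; as $p_0$ is its own antipode and is disjoint from $\Lambda$ (one is a zero-dimensional source, the other a one-dimensional attractor), the inclusion $\sigma(\Lambda)\subset\{p_0\}\cup\Lambda$ forces $\sigma(\Lambda)=\Lambda$. The remaining properties of $\Lambda$ are standard features of a hyperbolic attractor: being a hyperbolic set, $f_{DA}|_\Lambda$ is expansive and hence cw-expansive; being an attractor, it admits a trapping neighborhood $U$ with $f_{DA}(\overline U)\subset U$ and $\Lambda=\bigcap_{n\geq 0} f_{DA}^n(U)$, so $U$ is an isolating neighborhood and $\Lambda$ is $f_{DA}$-isolated.

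With these ingredients in place, Corollary \ref{corCocCwExp} yields that $f_{PDA}\colon\tilde\Lambda\to\tilde\Lambda$ is cw-expansive, while Proposition \ref{propCocienteIso} yields that $\tilde\Lambda=\pi(\Lambda)$ is $f_{PDA}$-isolated, proving the proposition. The only mildly delicate point in this plan is the saturation verification above; everything else is simply invoking the abstract machinery from \S\ref{secSysDyn}.
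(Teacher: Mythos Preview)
Your proof is correct and follows exactly the paper's approach: invoke Corollary~\ref{corCocCwExp} and Proposition~\ref{propCocienteIso} via the antipodal quotient $\pi$, using hyperbolicity of $\Lambda$ for cw-expansivity and its attractor property for isolation. You are in fact more careful than the paper, which tacitly uses the $\pi$-saturation of $\Lambda$ that you explicitly verify.
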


\begin{proof}
Notice that $\Lambda$ is a hyperbolic set for $f_{DA}$ and consequently it is expansive and cw-expansive.
Since the semiconjugacy $\pi$ satisfies the conditions explained in Remark \ref{rmkCondPi} we can apply Corollary \ref{corCocCwExp} to conclude that  
$f_{PDA}$ is cw-expansive on $\tilde\Lambda$.
Analogously, by Proposition \ref{propCocienteIso} we have that $\tilde\Lambda$ is isolated for $f_{PDA}$.
\end{proof}

\subsection{Diffeomorphisms}
\label{secDiff}
This brief section is devoted to summarize known results about diffeomorphisms that will be useful in what follows.
The first result allows us to perturb a homeomorphism to obtain a diffeomorphism.

\begin{thm}[Munkres-Whitehead]
\label{thmMuWh}
Let $M$ and $N$ be $n$-dimensional differentiable manifolds, $n\leq 3$.
If $f\colon M\to N$ is a homeomorphism then $f$ may be $C^0$ approximated by a diffeomorphism of $M$ onto $N$.
\end{thm}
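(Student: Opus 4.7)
The plan is to combine three ingredients that are special to low dimensions: the existence of smooth triangulations, the Hauptvermutung (uniqueness of PL structure) in dimensions at most $3$, and the smoothability of PL homeomorphisms.

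First, I would fix a smooth triangulation of each of $M$ and $N$ (which exists for any smooth manifold by the theorems of Cairns and Whitehead). With this in hand, I would pass to a sufficiently fine barycentric subdivision of the triangulation of $M$ and apply the simplicial approximation theorem to produce a simplicial (hence piecewise linear) continuous map $f_0 \colon M \to N$ that is arbitrarily $C^0$-close to $f$. This step is routine, and its output is a genuine PL map — but only a \emph{map}, not necessarily a homeomorphism.

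The main obstacle is precisely this: the simplicial approximation $f_0$ may fail to be injective even though $f$ is a homeomorphism. This is where the dimensional restriction $n \leq 3$ enters. I would invoke the Hauptvermutung in low dimensions (classical for $n = 2$, due to Moise for $n = 3$, building on Rad\'o's triangulability result): a homeomorphism between triangulated manifolds of dimension at most $3$ can be $C^0$-approximated by PL homeomorphisms. Applying this to $f$ directly (bypassing the intermediate $f_0$) yields a PL homeomorphism $g \colon M \to N$ that is $C^0$-close to $f$. This is the hard analytic step of the argument.

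Finally, I would smooth $g$ to a diffeomorphism. Inside the interior of every top-dimensional simplex, $g$ is affine and hence already smooth; the problem is only along the $(n-1)$-skeleton. I would smooth inductively from high codimension to low codimension: near each simplex, replace $g$ by a convolution with a bump function supported in a tiny collar, taking care that the perturbation is so small that injectivity is preserved and the derivative remains nonsingular. The reason this inductive smoothing terminates successfully in the homeomorphism category is that the obstruction groups $\pi_i(\mathrm{PL}/\mathrm{O})$ vanish for $i \leq 3$, so every PL structure in this range admits a compatible smooth structure. The resulting diffeomorphism is $C^0$-close to $g$, and therefore to $f$, completing the approximation.
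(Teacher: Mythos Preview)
The paper does not prove this theorem at all: it is quoted as a known result and attributed to Munkres and Whitehead, with a pointer to Pilyugin's book as a secondary reference. So there is no ``paper's own proof'' to compare against; any coherent argument you supply is already more than what the paper contains.

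That said, your sketch is broadly the classical route and is essentially correct in spirit. A couple of remarks. First, the detour through a simplicial approximation $f_0$ that you then abandon is unnecessary and slightly misleading: the simplicial approximation theorem produces maps, not homeomorphisms, and you correctly observe this and then discard $f_0$. You might as well go directly to the substantive step (Moise in dimension $3$, Rad\'o in dimension $2$) that a topological homeomorphism between triangulated low-dimensional manifolds is $C^0$-approximable by a PL homeomorphism. Second, the smoothing step is exactly the content of the Munkres and Whitehead papers cited; invoking vanishing of $\pi_i(\mathrm{PL}/\mathrm{O})$ for $i\le 3$ is the modern way to summarize why the inductive smoothing over the skeleton succeeds, though the original arguments were more hands-on. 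Your description of smoothing ``by convolution with a bump function'' is a bit glib---the actual construction is more delicate, involving careful chart-by-chart modifications that preserve bijectivity---but the outline is right. In short: your proposal is a reasonable sketch of the standard proof, and the paper itself simply cites the result without argument.
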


Theorem \ref{thmMuWh} was proved by J. Munkres \cite{Mun} and J.H.C. Whitehead \cite{Wh}*{Corollary 1.18}; and we learnt it from \cite{Pil}*{Theorem 0.1.1}.

The next result allows us to obtain a structurally stable diffeomorphism.
Let $\Diff^r(M)$ be the space of $C^r$ diffeomorphisms of $M$ with the $C^r$ topology.
We say that $f\in\Diff^r(M)$ is \textit{structurally stable} if there exists a neighborhood $U_f$ of $f$ in $\Diff^r(M)$ such that given $g\in U_f$ there exists $h\in\homeo(M)$ such that $hf=gh$.

\begin{thm}[Shub]
\label{thmShub}
Let $M$ be a $C^\infty$ compact manifold without boundary and $1\leq r\leq\infty$. Then the structurally stable diffeomorphisms are dense in $\Diff^r(M)$ with the $C^0$ topology.
\end{thm}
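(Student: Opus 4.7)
The strategy is to reduce Shub's density theorem to two classical ingredients: the Palis--Smale structural stability of Morse--Smale diffeomorphisms, and the $C^0$-density of Morse--Smale diffeomorphisms in $\Diff^r(M)$. The first is well established, so the heart of the matter is to construct, for any $f\in\Diff^r(M)$ and any $\epsilon>0$, a Morse--Smale diffeomorphism $g$ with $d_{C^0}(f,g)<\epsilon$.

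My plan for the $C^0$-approximation would be to fix a smooth Morse function $h\colon M\to\R$ and a gradient-like vector field $X$ associated to $h$, and consider the time-one map $\phi$ of the flow of $X$. After a standard Kupka--Smale perturbation, $\phi$ becomes Morse--Smale with non-wandering set equal to the critical points of $h$, all hyperbolic, and with transverse stable/unstable intersections. Of course $\phi$ is nowhere near $f$, so the key step is to \emph{insert} $f$ into this Morse--Smale skeleton. I would choose a triangulation of $M$ of mesh much smaller than $\epsilon$, and modify $\phi$ cell-by-cell so that the resulting diffeomorphism $g$ satisfies $\dist(g(x),f(x))<\epsilon$ on each top-dimensional cell, while the global hyperbolic skeleton inherited from $\phi$ is preserved. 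The flexibility of the $C^0$-topology is exactly what makes this ``hybrid'' construction possible: matching the images of small cells up to $\epsilon$ imposes no $C^r$ constraints, whereas a comparable $C^r$-statement is known to fail in general.

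The main obstacle is to carry out these local modifications without destroying Morse--Smale-ness, i.e.\ without spoiling the hyperbolicity of the periodic orbits or the transversality of the invariant manifolds. A clean way to organize the construction is inductively, over a handle decomposition subordinate to $h$: begin near the sinks and proceed outward by index, correcting stable and unstable manifolds at each stage by a generic $C^r$-small perturbation (supplied by the Kupka--Smale theorem) that is absorbed into the $\epsilon$-budget provided by the $C^0$-topology. A more technical point I would need to control is that these local corrections glue into a single diffeomorphism of $M$; this can be arranged using partitions of unity adapted to the triangulation together with the standard fact that $\Diff^r(M)$ is locally contractible in the $C^0$ topology, so that small $C^0$-isotopies may be realized by compositions of diffeomorphisms supported in arbitrarily small balls.
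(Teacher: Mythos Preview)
First, note that the paper does not give its own proof of this theorem: it cites Shub and Smale and records only the strategy, namely that one $C^0$-perturbs to obtain Axiom~A, the no-cycle condition and strong transversality, and then invokes Robbin's theorem. Your overall reduction (Morse--Smale $\Rightarrow$ structurally stable via Palis--Smale, so it suffices to prove $C^0$-density of Morse--Smale diffeomorphisms) is a legitimate special case of that strategy. The difficulty is entirely in the density step, and that is where your sketch goes wrong.

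Your construction runs in the wrong direction. You fix a gradient-like time-one map $\phi$, with its periodic points pinned at $\mathrm{Crit}(h)$, and then propose to modify $\phi$ cell-by-cell until it lands $\epsilon$-close to the given $f$ ``while the global hyperbolic skeleton inherited from $\phi$ is preserved.'' These two requirements are incompatible. If $g$ retains $\phi$'s fixed points at $\mathrm{Crit}(h)$ and $g$ is $C^0$-close to $f$, then $f$ must nearly fix each point of $\mathrm{Crit}(h)$; for a generic $f$ (say a circle rotation by an irrational angle, or any map with $\dist(f(p),p)$ bounded away from zero) this already fails, regardless of how fine the triangulation is. The cell-by-cell modifications needed to drag $\phi$ over to $f$ are not small relative to the cells; they are as large as $d_{C^0}(\phi,f)$, and there is no mechanism by which $\phi$'s hyperbolic orbits or invariant manifolds survive them.

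Shub's actual argument goes the other way: one starts from $f$, chooses a handle decomposition of $M$ fine relative to $\epsilon$, and then $C^0$-perturbs $f$ so that the perturbed map \emph{respects the associated filtration} (each sublevel $M_i$ is mapped into its interior, with the appropriate normal behaviour on each handle). The $C^0$-flexibility you correctly emphasise is what allows one to force $g(M_i)\subset\interior(M_i)$ by an $\epsilon$-small push, something impossible in $C^1$. A diffeomorphism fitted to a handle decomposition in this sense is then shown to be Morse--Smale (each $k$-handle contributes a hyperbolic periodic orbit of index $k$; the filtration gives the no-cycle condition; a further generic perturbation gives transversality). So the Morse function and handle structure are adapted to $f$, not the reverse; the auxiliary gradient map $\phi$ plays no role.
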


Theorem \ref{thmShub} was proved by M. Shub \cite{Shub} following S. Smale \cite{Smale}.
In fact both conclude the structural stability by proving the
Axiom A, the no-cycle condition and the transversality condition; and applying J. Robbin's Theorem \cite{Robbin}.
For our purposes we state: for $M$ a $C^\infty$ compact surface without boundary
every homeomorphism of $M$ can be $C^0$ approximated by a diffeomorphism satisfying
the Axiom A, the spectral decomposition of the non-wandering set \cite{Smale67}, the no-cycle condition and the transversality condition.


\subsection{Density and genericity on surfaces}

We state and prove the main results of the paper.

\begin{thm}
\label{thmCwExpDensos}
If $M$ is a closed surface then the set of cw-expansive homeomorphisms of $M$ is $C^0$ dense in $\homeo(M)$.
\end{thm}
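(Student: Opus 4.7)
The plan is to begin from a structurally stable diffeomorphism, destroy its periodic sinks and sources by splicing in the pseudo-DA construction of Section~\ref{secDPA}, and then apply Theorem~\ref{thmPertCercaBorde} locally to make the stable and unstable decompositions cw-transverse. Given $f\in\homeo(M)$ and $\epsilon>0$, I would first combine Theorem~\ref{thmMuWh} and Theorem~\ref{thmShub} to produce a diffeomorphism $f_1$ that is $C^0$-close to $f$ and satisfies Axiom~A, the spectral decomposition, the no-cycle condition and strong transversality. On a closed surface the basic sets of $f_1$ are either periodic sinks, periodic sources, saddle hyperbolic basic sets, or one-dimensional expanding attractors or contracting repellers; only the point sinks and sources are incompatible with cw-expansivity, since they are Lyapunov stable.

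Next, I would eliminate each periodic sink $p$ of period $k$ as follows: choose a small disk $D_p$ around $p$ with $f_1^k(D_p)\subset\interior(D_p)$, and by Proposition~\ref{propPDAcwexpIso} identify the complement in $\Sph^2$ of a small neighborhood of the repelling fixed point $\tilde p_0$ with a closed disk $D^{\ast}$ which is mapped strictly inside itself by $f_{PDA}$ and contains the cw-expansive isolated set $\tilde\Lambda$. Conjugating $f_{PDA}|_{D^{\ast}}$ into $D_p$, and doing the same with $f_{PDA}^{-1}$ at each periodic source, yields a homeomorphism $f_2$, still $C^0$-close to $f$, all of whose attractors and repellers are cw-expansive. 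At this stage every local stable and unstable continuum of $f_2$ is one-dimensional, so the induced decompositions $P$ (stable) and $Q$ (unstable) of $M$ are meagre.

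Finally, I would perturb $f_2$ so that $P$ and $Q$ become cw-transverse, which by the results quoted from \cite{AAV} in Section~\ref{secResidual} yields cw-expansivity. Cover $M$ by small closed disks $D_1,\dots,D_N$ coming from a sufficiently fine triangulation; by Proposition~\ref{propCurvaCwTransv} each $\partial D_j$ can be arranged to be cw-transverse to $P$ and $Q$. Then Theorem~\ref{thmPertCercaBorde} provides, inside each $D_j$, an arbitrarily small homeomorphism $h_j$ fixing $\partial D_j$ such that $(h_j^{\ast}P)\cap Q$ has only singleton plaques in the interior of $D_j$. Composing these one at a time with $f_2$ should produce the required cw-expansive homeomorphism $g$ that is $\epsilon$-close to $f$.

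The main obstacle will be the globalization of this last step: each local perturbation $h_j$ must be controlled so as not to spoil the cw-transversality already achieved on the earlier disks, and the stable and unstable decompositions of the perturbed homeomorphism must remain meagre at every stage so that Theorem~\ref{thmPertCercaBorde} can be reapplied. The fact that each $h_j$ is the identity on $\partial D_j$ is the key technical ingredient that should allow these local perturbations to be superposed coherently, and the cw-transversality of every $\partial D_j$ to both decompositions is what prevents the trouble from being pushed across disk boundaries.
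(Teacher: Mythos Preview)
Your first two steps---approximate by an Axiom~A diffeomorphism with strong transversality and then replace each periodic sink and source by a copy of the pseudo-DA attractor/repeller---match the paper exactly. The divergence, and the genuine gap, is in the final step.

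You propose to cover $M$ by small disks $D_1,\dots,D_N$ and apply Theorem~\ref{thmPertCercaBorde} on each. But Theorem~\ref{thmPertCercaBorde} produces an $h_j$ with $h_j^{\ast}P$ cw-transverse to $Q$ for \emph{fixed} decompositions $P,Q$; when you then replace $f_2$ by $h_j\circ f_2$, the stable and unstable decompositions of the new homeomorphism are \emph{not} $h_j^{\ast}P$ and $Q$---they depend on the full orbit structure of $h_j\circ f_2$, which has changed globally. So the cw-transversality you obtain on $D_j$ is for the wrong pair of decompositions, and nothing prevents the perturbation on $D_j$ from destroying what was achieved on $D_1,\dots,D_{j-1}$. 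That $h_j$ fixes $\partial D_j$ only guarantees the maps glue to a homeomorphism of $M$; it says nothing about invariant sets, since orbits pass through many disks. You identify this obstacle yourself but do not resolve it, and it does not seem resolvable along these lines.

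The paper sidesteps this with a sharper localization. Because $f_0$ already satisfies the strong transversality condition, any non-trivial continuum $C$ with $\sup_{i}\diam f_1^{i}(C)$ small must lie in $W^u(x)\cap W^s(y)$ with at least one of $x,y$ in a \emph{singular} basic set (the hyperbolic-to-hyperbolic intersections are already transverse). Hence one only needs to perturb inside a disk neighborhood $V_{at}$ of each singular attractor (and dually for repellers), and these neighborhoods are pairwise disjoint and lie in wandering regions. Inside $V_{at}$ there is a single stable decomposition $Q^s_{at}$ and a \emph{countable} family $Q^u_{i,n}$ of incoming unstable decompositions (iterates of the unstable laminations of the saddle pieces, restricted to $V_{at}$); one application of Theorem~\ref{thmPertCercaBorde} together with Baire handles all of them at once. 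Since the support is away from the non-wandering set, the other basic pieces and their laminations are untouched, and the finitely many perturbations at different singular pieces do not interact. This localization is the idea your argument is missing.

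A minor point: the inference ``cw-transversality of local stable and unstable continua $\Rightarrow$ cw-expansivity'' is not what Section~\ref{secResidual} or \cite{AAV} supplies; it follows directly from the definitions once you know every continuum with small iterates is contained in a local stable set and in a local unstable set.
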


\begin{proof}

From \S\ref{secDiff} we start with a diffeomorphism
$f_0$ satisfying Smale's Axiom A, the transversality condition on 
stable and unstable manifolds and the no cycle condition. We will $C^0$ approximate $f_0$ by a homeomorphism in $F_n$.

Suppose that $p\in M$ is an attracting periodic point of $f_0$ of period $k$.
Then, performing a suitable small $C^0$ perturbation of $f_0$ supported in a small neighborhood of the orbit of $p$
we can obtain a dynamics conjugate to the homeomorphism $f_{PDA}$ defined in \S\ref{secDPA} in the neighborhood of $p$.
Repeating this procedure around each attracting periodic orbit.
For repelling periodic orbits we do the same but considering the inverse $f_{PDA}^{-1}$.
In this way we obtain a homeomorphism $f_1$ which is $C^0$ close to $f_0$ and its stable and unstable manifolds are all one-dimensional.
Let us call \textit{singular basic sets} to the new pieces introduced by $f_1$ in the nonwandering set.
The other pieces, the basic set common to $f_0$ and $f_1$, are hyperbolic, thus expansive, cw-expansive and isolated.
The new pieces of $f_1$ are cw-expansive and isolated by Proposition \ref{propPDAcwexpIso}.
Thus, the non-wandering set of $f_1$ is a finite union of isolated cw-expansive sets.

In what follows we will approximate $f_1$ by a 
cw-expansive homeomorphism.
From what we know about the non-wandering set of $f_1$ we can take $\alpha>0$ such that if
\begin{equation}
\label{ecuContCChico}
\sup_{i\in\Z}\diam(f^i(C))\leq\alpha
\end{equation}
for a non-trivial continuum $C\subset M$ then
$C\cap\Omega(f_1)=\emptyset$.
Moreover, from the construction of $f_1$ we have that if $C$ satisfies \eqref{ecuContCChico} then there are $x,y\in\Omega(f_1)$
such that $C\subset W^u(x)\cap W^s(y)$.
From the no cycle condition we have that $x,y$ are in different basic sets of $f_1$.


We have that all the invariant manifolds (stable and unstable) of $f_1$ are one-dimensional; and they are transverse  if the points are associated to hyperbolic pieces of $f_1$ (those comming from $f_0$). This means that if such continuum $C$ exists then at least one of the points $x,y$
must belong to a singular basic set of $f_1$.

Let $U\subset B_\alpha(\Omega(f_1))$ be an isolating neighborhood of the non-wandering set of $f_1$.
It is a general fact (for any homeomorphism of a compact metric space) that there is $N>0$ such that each point of $M$ has at most $N$ iterates outside $U$.
Thus, to finish the proof it is sufficient to perform a perturbation supported on wandering points
near the singular attracting and repelling basic sets
to obtain cw-transversality of stable and unstable manifolds at every point of the surface.

Suppose that $\Lambda_{at}\subset M$ is a singular attracting set locally conjugate to $f_{PDA}$
and take a closed disc $V$ containing $\Lambda_{at}$ in its interior
and satisfying $f_1(V)\subset \interior V$.
Let $Q^s_{at}$ be the stable decomposition of $V$ into stable continua of $\Lambda_{at}$.
We can assume that $\partial V$ is cw-transverse to $Q^s_{at}$.
Let $\Lambda_1,\dots,\Lambda_k$ be the basic sets of $f_0$ with one-dimensional splitting. 
These pieces are shared with $f_1$. 

Each $\Lambda_i$ has a closed neighborhood $V_i$ where the unstable manifolds $W^u_{V_i}(\Lambda_i)$ make a partial decomposition of $V_i$.
The existence and semicontinuity of the these arcs is a consequence of the classical Invariant Manifold Theorem.
Let $Q^u_i$ be the unstable partial decomposition of $V_i$ by unstable arcs of $\Lambda_i$.
For each $n\geq 0$ and $1\leq i\leq k$, we have that $f^n(Q^u_i)$ is a partial decomposition of $f^n(V_i)$. 
Let $Q^u_{i,n}$ be the decomposition of $V_{at}$ (the neighborhood of the attractor $\Lambda_{at}$) whose plaques are the connected components of $f^n(Q^u_i)\cap V_{at}$ and extended by singletons to the whole $V_{at}$. 
By construction, each $Q^u_{i,n}$ is meagre.

By Proposition \ref{propCurvaCwTransv} we have that a generic perturbation of $\partial V_{at}$ is cw-transverse to $Q^s_{at}$ and $Q^u_{i,n}$. Thus, we assume that $\partial V_{at}$ is cw-transverse to these meagre decompositions, for all $n\geq 0$ and $1\leq i\leq k$.
Now, applying Theorem \ref{thmPertCercaBorde} we can perturb $f_1$ to obtain a homeomorphism $f_2$ such that $Q^s_{at}$ is cw-transverse to $Q^u_{i,n}$ 
for all $n\geq 0$ and $1\leq i\leq k$.
Repeating this procedure to the remaining singular attractors and repellers we obtain a perturbation which is cw-expansive and the proof ends.
\end{proof}

%

\begin{cor}
\label{corGenACEsup}
For a generic homeomorphism $f$ of the compact surface $M$ without boundary it holds that 
$f$ is almost cw-expansive.
\end{cor}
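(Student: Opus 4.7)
The plan is to assemble the corollary as an immediate consequence of Theorem \ref{thmCwExpDensos} together with the machinery from \S\ref{secResidual} and Remark \ref{rmkCwDenseAlmcwexpGen}. Concretely, I would take as the candidate residual set the $G_\delta$ subset
\[
G = \bigcap_{n \in \N} F_n, \qquad F_n = \bigcup_{0 < \expc < 1/n} E_\expc,
\]
already constructed in \S\ref{secResidual}, and then verify two things: (i) every $f\in G$ is almost cw-expansive, which is already established in \S\ref{secResidual}; and (ii) $G$ is a dense $G_\delta$ in $\homeo(M)$ with the $C^0$ topology.

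For (ii), the openness of each $F_n$ is already recorded in \S\ref{secResidual} (each $E_\expc$ is open by the cited result from \cite{AAV}, hence so is any union of them). Thus it only remains to check density of each $F_n$. For this I would observe that if $f$ is cw-expansive with expansivity constant $c$, then for any $\expc \leq c$ and any continuum $C$ with $\sup_i \diam f^i(C) \leq \expc$, the continuum $C$ must be a singleton, so $\sup_i \diam f^i(C) = 0 < \expc/2$; hence $f \in E_\expc$. Choosing $\expc = \min(c, 1/(2n))$ shows $f \in F_n$ for every $n$, i.e., the set of cw-expansive homeomorphisms is contained in $G$. Invoking Theorem \ref{thmCwExpDensos}, the set of cw-expansive homeomorphisms is dense, so $G$ contains a dense subset and, being $G_\delta$, is itself a dense $G_\delta$.

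This is essentially the content of Remark \ref{rmkCwDenseAlmcwexpGen}, which already packages the implication ``cw-expansives dense $\Rightarrow$ $G$ dense $G_\delta$'' from the openness of the $F_n$'s and the inclusion of cw-expansive maps into $G$. So the proof reduces to citing Theorem \ref{thmCwExpDensos} and applying that remark. There is no genuine obstacle here, since all the nontrivial work (the openness of $E_\expc$, the fact that elements of $G$ are almost cw-expansive, and the $C^0$ density of cw-expansive homeomorphisms on surfaces) has already been done; the corollary is a clean packaging step. The only mildly delicate point worth stating explicitly in the write-up is the verification that every cw-expansive homeomorphism really does lie in every $F_n$, since that inclusion is what lets the density of cw-expansive maps upgrade to density of $G$.
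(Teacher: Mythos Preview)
Your proposal is correct and follows essentially the same approach as the paper: cite Theorem~\ref{thmCwExpDensos} for density of cw-expansive homeomorphisms and invoke Remark~\ref{rmkCwDenseAlmcwexpGen} (together with the construction of $G$ in \S\ref{secResidual}) to conclude that $G$ is a dense $G_\delta$ of almost cw-expansive homeomorphisms. The only addition in your write-up is making explicit the inclusion of cw-expansive maps in every $F_n$, which the paper leaves implicit in the remark.
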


\begin{proof}
From Theorem \ref{thmCwExpDensos} we know that cw-expansive homeomorphisms are $C^0$ dense. Consequently, the sets $F_n$ defined in \eqref{ecuFn} are dense and we conclude that almost cw-expansivity is generic as explained in 
Remark \ref{rmkCwDenseAlmcwexpGen}.
\end{proof}

\begin{cor}
\label{corGenACEsup2}
For a generic homeomorphism $f$ of the compact surface $M$ without boundary it holds that 
for all $\epsilon>0$ 
there are $g,\pi\colon M\to M$, $g$ is a cw-expansive homeomorphism, $\pi$ is continuous, onto and satisfies
$\dist_{C^0}(f,g)<\epsilon$,
$\pi f=g\pi$ and
$\pi^{-1}(x)$ is a continuum of diameter less than $\epsilon$ for all $x\in M$.
\end{cor}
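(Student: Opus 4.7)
The plan is to bootstrap Corollary \ref{corGenACEsup} in two steps: (i) identify the abstract quotient space produced by almost cw-expansivity with $M$ itself via a Moore-type decomposition theorem for surfaces, and (ii) verify that the conjugated cw-expansive dynamics lands $C^0$-close to $f$. Start with a generic $f \in G = \cap_n F_n$ from Section \ref{secResidual}. Given $\epsilon > 0$, pick $n$ large and $\expc \in (0, 1/n)$ with $f \in E_\expc$, so that the $f$-invariant, upper semicontinuous equivalence relation $\sim_\expc$ has classes that are continua of diameter less than $\expc/2$, quotient map $\pi_0 \colon M \to N = M/\sim_\expc$, and induced cw-expansive $g_0 \in \homeo(N)$ with $\pi_0 f = g_0 \pi_0$. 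The work that remains is to replace the abstract $N$ by $M$ and to bound $\dist_{C^0}(f, g)$.

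For the identification step, I would invoke Moore's decomposition theorem for closed surfaces: an upper semicontinuous decomposition of $M$ into cellular continua (non-separating continua admitting arbitrarily small simply connected neighborhoods) has quotient homeomorphic to $M$, and moreover the quotient map is a near-homeomorphism, so for every prescribed $\eta > 0$ one can find a homeomorphism $\psi \colon N \to M$ with $\dist_{C^0}(\psi \circ \pi_0, \mathrm{id}_M) < \eta$. The delicate point is that this requires cellularity of every class of $\sim_\expc$, a property that is not automatic from the abstract construction of Section \ref{secResidual}. I would handle this by returning to the proof of Theorem \ref{thmCwExpDensos}: the cw-expansive approximations constructed there are localized near hyperbolic basic sets and near the pseudo-DA attractors/repellers, so the non-trivial equivalence classes of $\sim_\expc$ for a nearby $f$ are built from short stable/unstable arcs contained in small topological disks, visibly cellular. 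Combining this with the openness of $E_\expc$, one can intersect $G$ with a further dense $G_\delta$ of $f$ for which, along a cofinal sequence $\expc_k \to 0$, every class of $\sim_{\expc_k}$ is cellular.

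Once cellularity is established, set $\pi = \psi \circ \pi_0 \colon M \to M$ and $g = \psi \circ g_0 \circ \psi^{-1} \colon M \to M$. Then $g$ is cw-expansive (conjugate to $g_0$), the identity $\pi f = g \pi$ holds by construction, and $\pi^{-1}(x) = \pi_0^{-1}(\psi^{-1}(x))$ is a continuum of diameter less than $\expc/2 < \epsilon$ for each $x \in M$. To bound $\dist_{C^0}(f, g)$, pick $y \in M$ and $x \in \pi^{-1}(y)$; then $g(y) = g(\pi(x)) = \pi(f(x))$, so
\[
\dist(g(y), f(y)) \leq \dist(\pi(f(x)), f(x)) + \dist(f(x), f(y)) \leq \eta + \omega_f(\expc/2),
\]
where $\omega_f$ is the modulus of continuity of $f$. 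Choosing $\expc$ small enough that $\omega_f(\expc/2) < \epsilon/2$ and then $\eta < \epsilon/2$ yields $\dist_{C^0}(f, g) < \epsilon$.

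The main obstacle is the cellularity verification for the classes of $\sim_\expc$: the abstract definition of almost cw-expansivity only delivers connectedness and small diameter, whereas applying Moore's theorem to realize the quotient as $M$ requires the classes to be non-separating. This is where surface topology genuinely enters, and it is most naturally addressed by tracking the geometric structure of the cw-expansive approximations supplied by Theorem \ref{thmCwExpDensos} rather than by any purely abstract argument from Section \ref{secResidual}.
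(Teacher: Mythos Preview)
Your overall strategy---pass to the quotient $N=M/{\sim_\alpha}$, identify $N$ with $M$ via a Moore/Roberts--Steenrod decomposition theorem, and conjugate the cw-expansive dynamics back to $M$---is precisely the mechanism behind the result the paper invokes, \cite{AAV}*{Theorem~3.6}. The $C^0$-distance estimate you give is also essentially correct once the near-homeomorphism $\psi$ is in hand.

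The genuine gap is the cellularity (equivalently, non-separation) of the classes of $\sim_\alpha$, and your proposed remedy does not close it. For a generic $f\in G$ the classes of $\sim_\alpha$ are defined intrinsically by the dynamics of $f$; they bear no a~priori relation to the stable/unstable arcs of the cw-expansive approximations built in Theorem~\ref{thmCwExpDensos}. Indeed, those approximations are \emph{themselves} cw-expansive, so for small $\alpha$ their $\sim_\alpha$-classes are singletons---there is no arc structure to transfer to a ``nearby $f$''. Your further suggestion to ``intersect $G$ with a further dense $G_\delta$'' on which all classes are cellular is unsupported: nothing in the openness of $E_\alpha$ shows either that this condition is $G_\delta$ or that it is dense, and you give no argument for either.

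What is actually required---and what \cite{AAV}*{Theorem~3.6} supplies---is an intrinsic argument valid for \emph{every} $f\in E_\alpha$ on a closed surface: each class of $\sim_\alpha$ fails to separate $M$. This is proved directly from the half cw-expansivity condition together with surface topology, not by appealing to any particular approximating family. Once non-separation is established, Roberts--Steenrod \cite{RS38} identifies $N$ with $M$ and delivers the near-homeomorphism you need. So your outline is sound, but the decisive step is exactly the one you correctly flag as ``the main obstacle'', and it must be argued from the defining property of $E_\alpha$ rather than from the perturbative construction of Theorem~\ref{thmCwExpDensos}.
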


\begin{proof}
Recall that the residual set $G$ defined in \S\ref{secResidual} is the intersection of $F_n$ for $n\geq 1$. Also, $F_n=\cup_{0<\expc<1/n} E_\expc$ where 
\[
E_\expc=\left\{
f\in\homeo(M):
\sup_{i\in\Z}\diam f^i(C)\leq\expc\Rightarrow
\sup_{i\in\Z}\diam f^i(C)<\frac\expc 2,
\forall C\in \C(M)
\right\}.
\]
Thus, given $f\in G$ we have that $f$ is in $E_\expc$ for arbitrarily small values of $\expc$. In the terminology of \cite{AAV} this means that 
$f$ is half cw-expansive with expansivity constant $\expc$. 
Therefore, the result follows by \cite{AAV}*{Theorem 3.6}.
%
%
\end{proof}

In order to complement the picture of generic surface homeomorphisms let us mention some other known results. We recommend \cite{Pil} for more on this topic.
A generic surface homeomorphism satisfies
the pseudo-orbit tracing property and is not topologically stable \cite{Oda}.
Also, the non-wandering set has no explosions \cite{Ta1}, is the closure of the set of periodic points \cites{Cov,Pil} and is a Cantor set \cite{AHK}.
These are just a few known properties, more can be found in the references.

%

%
%
%
%


\begin{bibdiv}
\begin{biblist}
\bib{AAV}{article}{
author={M. Achigar}, 
author={A. Artigue},
author={J. Vieitez},
title={New cw-expansive homeomorphisms of surfaces},
year={2020},
journal={Annales de la faculté des sciences de Toulouse},
volume={XXIX},
number={2},
pages={221-246}}

\bib{AHK}{book}{
title={Dynamics of topologically generic homeomorphisms},
author={E. Akin},
author={M. Hurley},
author={J.A. Kennedy},
publisher={American Mathematical Society},
year={2003}}

\bib{ArRobN}{article}{
author={A. Artigue},
title = {Robustly N-expansive surface diffeomorphisms},
journal={Discrete and continuous dynamical systems},
volume={36},
year={2016},
pages={2367--2376}}

\bib{ArAnomalous}{article}{
author={A. Artigue},
title = {Anomalous cw-expansive surface homeomorphisms},
journal={Discrete and continuous dynamical systems},
volume={36},
year={2016},
pages={3511--3518}}

\bib{Dend}{article}{
author={A. Artigue},
journal={Ergod. Th. \& Dynam. Sys.},
year={2017},
doi={doi:10.1017/etds.2017.14},
title={Dendritations of surfaces}}

\bib{Bryant54}{book}{
author={B.F. Bryant},
title={Unstable self-homeomorphisms of a compact space},
publisher={Vanderbilt University Thesis},
year={1954}}


\bib{Cov}{article}{
author={E.M. Coven},
author={J. Madden},
author={Z. Nitecki},
title={A note on generic properties of continuous maps},
journal={Ergodic Theory and Dynamical Systems II (Progress in Math. 21)},
publisher={Birkh\"auser-Verlag},
year={1982},
pages={97-101}}

\bib{FG}{article}{
author={A. Fakhari},
author={F.H. Ghane},
title={Perturbation of continuum-wise expansive homeomorphisms},
journal={Journal of Dynamical Systems and Geometric Theories},
volume={3},
year={2005},
pages={115--120}}

\bib{Hi90}{article}{
author={K. Hiraide},
title={Expansive homeomorphisms of compact surfaces are pseudo-Anosov},
journal={Osaka J. Math.}, 
volume={27},
year={1990}, 
pages={117--162}}


\bib{JU}{article}{
author={J.F. Jakobsen},
author={W.R. Utz}, 
title={The non-existence of expansive homeomorphisms on a closed $2$-cell}, 
journal={Pacific J. Math.}, 
year={1960}, 
volume={10},
pages={1319--1321}}

\bib{Kato902}{article}{
author={H. Kato},
title={The nonexistence of expansive homeomorphisms of 1-dimensional compact ANRS},
journal={Proceedings of the AMS},
volume={108},
year={1990},
pages={267--269}}

\bib{Kato93}{article}{
author={H. Kato},
title={Continuum-wise expansive homeomorphisms},
journal={Canad. J. Math.},
volume={45},
number={3},
year={1993},
pages={576--598}}


\bib{Le83}{article}{
author={J. Lewowicz},
title={Persistence in expansive systems},
journal={Ergod. Th. \& Dynam.  Sys.},
year={1983},
volume={3},
pages={567--578}}

\bib{Le89}{article}{
author={J. Lewowicz},
title={Expansive homeomorphisms of surfaces},
journal={Bol. Soc. Bras. Mat.}, 
volume={20}, 
pages={113--133}, 
year={1989}}

\bib{Mun}{article}{
author={J. Munkres},
title={Obstructions to the smoothing of piecewise-differentiable homeomorphisms},
year={1959},
journal={Bull. Amer. Math. Soc.},
volume={65},
pages={332--334}}

\bib{Oda}{article}{
author={K. Odani},
title={Generic homeomorphisms have the pseudo-orbit tracing property},
year={1990},
journal={Proc. Amer. Math. Soc.},
volume={110},
pages={281-284}}

\bib{Pil}{book}{
author={S.Y. Pilyugin},
title={The Space of Dynamical Systems with the $C^0$-Topology},
publisher={Springer-Verlag Berlin Heidelberg},
year={1994}}

\bib{Robbin}{article}{
author={J. Robbin},
title={On structural stability},
journal={Bull. Amer. Math. Soc.},
volume={76},
year={1970},
pages={723-726}}

\bib{RS38}{article}{
author={J.H. Roberts},
author={N.E. Steenrod}, 
title={Monotone transformations of two-dimensional manifolds}, 
journal={Ann. Math.},
volume={39},
year={1938}, 
number={4}, 
pages={851-862}}

\bib{Robinson}{book}{
author={C. Robinson},
year={1995},
publisher={CRC Press, Inc.},
title={Dynamical Systems}}



\bib{Shub}{article}{
author={M. Shub},
year={1972},
title={Structurally stable diffeomorphisms are dense},
journal={Bull. AMS},
volume={78},
pages={817--8}}

\bib{Smale67}{article}{
author={S. Smale},
title={Differentiable dynamical systems},
journal={Bull. Amer. Math. Soc.},
number={73},
year={1967},
pages={747-817}}

\bib{Smale}{article}{
title = {Stability and Isotopy in Discrete Dynamical Systems},
journal={Proc. Internat. Sympos. on Dynamical Systems (Salvador, Brazil, 1971)},
pages = {527-530},
year = {1973},
author = {S. Smale}}

\bib{Ta1}{book}{
author={F. Takens},
title={On Zeeman's tolerance stability conjecture},
year={1970},
publisher={Springer-Verlag},
pages={209-219},
series={Lect. Notes in Math.},
volume={468}}

\bib{Wh}{article}{
author={J.H.C. Whitehead},
title={Manifolds with Transverse Fields in Euclidean Space},
journal={Annals of Mathematics},
volume={73},
year={1961},
pages={154--212}}
\end{biblist}
\end{bibdiv}

\end{document}